\documentclass[preprint,12pt]{elsarticle}

\usepackage{amssymb}
\usepackage{amsmath}
\usepackage{color}
\usepackage{amsthm}
\allowdisplaybreaks[4]
\newtheorem{lm}{{\bf {Lemma}}}[section]
\newtheorem{thm}{{\bf {Theorem}}}[section]
\newtheorem{cor}{{\bf {Corollary}}}[section]
\newtheorem{Rm}{{\bf {Remark}}}[section]

\begin{document}
\begin{frontmatter}

\title{Inhomogeneous Picard-Fuchs equations of Abelian integrals in piecewise smooth near-Hamiltonian systems}

\author{Hefei Zhao}
\ead{mathzhaohefei@126.com}
\author{Yun Tian\corref{cor1}}
\cortext[cor1]{Corresponding author.}
\ead{ytian22@shnu.edu.cn}
\address{Department of Mathematics, Shanghai Normal University, Shanghai, China}

\begin{abstract}
In this paper, we explicitly obtain inhomogeneous Picard-Fuchs equations for Abelian integrals $I_{i,j}^+(h)$ , 
where $I_{i,j}^+(h)$ is an integral along orbital arcs defined by polynomials $\frac{1}{2}y^2 + F(x)=h$. 
Moreover, we discuss the method of using Picard-Fuchs equations to  recursively compute  
the asymptotic expansions of genearating functions of Abelian integrals near a homoclinic loop. 
As an application, we derive the maximum number of isolated zeros of Melnikov functions 
near a nilpotent saddle homoclinic loop for piecewise polynomials perturbations with the inclination $\theta$ of
the separation line as a free parameter. 
\end{abstract}

\begin{keyword}
Picard-Fuchs equations; Piecewise smooth perturbations;  limit cycles; Melnikov functions
\end{keyword}
\end{frontmatter}

\section{Introduction and main results }

It is well known that the algebraic structure of Abelian integrals and the associated Picard-Fuchs equations 
play an important role in the analysis of limit cycle bifurcations in (smooth) polynomial near-Hamiltonian systems.
The algebraic structure of Abelian integrals provides an efficient way to simplify Abelian integrals 
as linear combinations of a finite number of generating functions. 
By the Picard-Fuchs equations of generating functions we can study the qualitative property of Abelian integrals
to estimate the number of zeros of some Melnikov functions, see for example \cite{CCH1985,NM2017,NY2001,ZZ1999} and references therein.

Recently, the method of Picard-Fuchs equations has been applied in the study of limit cycles
 for some piecewise smooth near-Hamiltonian systems. For example see \cite{S2024,YZ2018},
where homogeneous Picard-Fuchs equations are obtained for a separation line fixed.
Then a question arises naturally: How far can the method of Picard-Fuchs equations be extended
in piecewise smooth near-Hamiltonian systems?

\subsection{Inhomogeneous Picard-Fuchs equations}
In this paper, we shall firstly derive Picard-Fuchs equations for
 piecewise smooth polynomial perturbations of planar Hamiltonian systems 
with Hamiltonians $H(x,y)=\frac{1}{2}y^2 + F(x)$, where $F(x)$ is a polynomial of degree $n$.
Take the straight line $L(x,y,\theta)=0$ as the separation line, where
\begin{equation*}\label{sp1}
L(x,y,\theta)= \sin(\theta)(x-c)-\cos(\theta) y,\quad 0<\theta\leq\pi,
\end{equation*}
and $c$ is a constant. Then the perturbed Hamiltonian system can be written as
\begin{equation}\label{e1}
\begin{split}
(\dot{x},\dot{y})=\left\{
\begin{aligned}
(H_y(x,y)+\varepsilon P^+(x,y),\,\,&-H_x(x,y)+\varepsilon Q^+(x,y)) ,\quad (x,y)\in\Sigma^+,\\[1ex]
(H_y(x,y)+\varepsilon P^-(x,y),\,\,&-H_x(x,y)+\varepsilon Q^-(x,y)) ,\quad (x,y)\in\Sigma^-,
\end{aligned}
\right.
\end{split}
\end{equation}
where  $|\varepsilon|\ll1$, $\Sigma^\pm=\{(x,y)|\pm L(x,y,\theta)>0\}$,
$P^\pm(x,y)$ and $Q^\pm(x,y)$ are polynomials in $(x,y)$.
Suppose that the unperturbed system \eqref{e1}$|_{\varepsilon=0}$ has a continuous period annulus
$\mathcal{A}=\{\Gamma_h| h\in(\alpha,\beta)\}$, where $\Gamma_h\subset H^{-1}(h)$ is a periodic orbit.
Further, we suppose that the separation line of \eqref{e1} splits $\Gamma_h$ into two nonempty connected parts 
$\Gamma_h^\pm=\Gamma_h\cap \Sigma^\pm$.

By \cite{LiuHan2010} the number of limit cycles produced from $\mathcal{A}$ in system \eqref{e1}
can be estimated by the number of isolated zeros of Melnikov function 
\begin{equation}\label{e2}
\widetilde M(h)=\,\int_{\Gamma_h^-}Q^-(x,y)\mathrm{d}x-P^-(x,y)\mathrm{d}y+\int_{\Gamma_h^+}Q^+(x,y)\mathrm{d}x-P^+(x,y)\mathrm{d}y
\end{equation}
for $h\in(\alpha,\beta)$. 
As we can see from \eqref{e2} that $\widetilde M(h)$ is a linear combination of Abelian integrals along $\Gamma_h^\pm$.
Before we present the Picard-Fuchs equations for these integrals, we should give a group of generating functions for $\widetilde M(h)$.

\begin{thm}\label{mth1}
Let 
\begin{equation}\label{h4a}
I_{i,j}(h)=\oint_{\Gamma_h}x^iy^j\mathrm{d}x,\quad
I_{i,j}^{+}(h)=\int_{\Gamma_h^+}x^iy^j\mathrm{d}x,\quad K_{i,j+1}(h)= \int_{\Gamma_h^+}\mathrm{d}(x^iy^{j+1}).
\end{equation}
For system \eqref{e1} the Melnikov function $\widetilde{M}(h)$ in \eqref{e2} can be written as
\begin{equation}\label{h4b}
\widetilde{M}(h)=\sum_{i=0}^{n-2}p_i(h)I_{i,1}(h)+\sum_{i=0}^{n-2}q_i(h)I^+_{i,1}(h)
+\sum_{i=0}^{n-2}s_{i}(h)\widetilde{K}_{i}(h),
\end{equation}
where $p_i(h)$, $q_i(h)$ and $s_{i}(h)$, $i=0,1,\ldots,n-2$, are polynomials in $h$, and
$\widetilde K_i(h)=K_{i+1,0}(h)$ for the separation line $y=0$,
otherwise $\widetilde K_i(h)=K_{0,i+1}(h)$.
\end{thm}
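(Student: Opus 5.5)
First we use $\Gamma_h^-=\Gamma_h\setminus\Gamma_h^+$ to rewrite $\widetilde M(h)$ as
\[
\widetilde M(h)=\oint_{\Gamma_h}\bigl(Q^-\mathrm{d}x-P^-\mathrm{d}y\bigr)+\int_{\Gamma_h^+}\bigl((Q^+-Q^-)\mathrm{d}x-(P^+-P^-)\mathrm{d}y\bigr).
\]
The closed-orbit part is handled by the classical reduction for $H=\frac12y^2+F(x)$, $\deg F=n$. We integrate $y\,\mathrm{d}y$-type terms by parts and use $y^2=2h-2F(x)$ to lower $y$-powers. We then use the relation $\oint x^i\,\mathrm{d}(H)=0$, i.e. $\oint x^iF'(x)\,y^{j}\mathrm{d}x$ expressed through lower-degree integrals. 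Together these reduce every $\oint x^iy^j\mathrm{d}x$ to $\sum_{i=0}^{n-2}p_i(h)I_{i,1}(h)$. Integrals with even $j$ vanish by symmetry, or become exact forms, which integrate to zero on a closed orbit. This step is standard and gives the first sum in \eqref{h4b}.

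For the arc part we repeat the same reduction, but now exact differentials no longer integrate to zero. Every exact piece $\int_{\Gamma_h^+}\mathrm{d}(x^iy^{j})$ survives as a boundary term $K_{i,j}(h)$. Concretely, $x^iy^j\mathrm{d}y$ is written as $\frac{1}{j+1}\bigl(\mathrm{d}(x^iy^{j+1})-ix^{i-1}y^{j+1}\mathrm{d}x\bigr)$. For even $j$ we use $y^{2k}=(2h-2F)^k$, giving polynomial-in-$x$ times $\mathrm{d}x$. Then $\int_{\Gamma_h^+}g(x)\mathrm{d}x=\int_{\Gamma_h^+}\mathrm{d}G(x)$ is itself a $K$-term, namely a combination of $K_{m,0}$. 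For odd $j=2k+1$ we reduce to $j=1$ with polynomial coefficients in $h$. We lower the $x$-degree using
\[
\mathrm{d}(x^{m}y^{3})=mx^{m-1}y^3\mathrm{d}x-3x^mF'(x)y\,\mathrm{d}x,
\]
combined with $y^3=(2h-2F)y$. This expresses $x^{m+n-1}y\,\mathrm{d}x$ through lower powers, $h$-multiples, and the boundary term $K_{m,3}$. The result is $\sum q_i(h)I^+_{i,1}$ plus a polynomial combination of the $K_{i,j}(h)$.

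The main step is collapsing all the $K_{i,j}$ to the single family $\widetilde K_i$. The endpoints $A(h),B(h)$ of $\Gamma_h^+$ lie on both $L=0$ and $H=h$, and $K_{i,j}=x_B^iy_B^j-x_A^iy_A^j$.

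If the separation line is $y=0$ ($\theta=\pi$), then $y_A=y_B=0$. So $K_{i,j}=0$ for $j\ge1$ and only $K_{i,0}=x_B^i-x_A^i$ remains. These are reduced modulo the relation $F(x_{A,B})=h$: we use $x^n=(h-\text{lower terms})/a_n$ to express $x^i$ for $i\ge n$ through $x^0,\dots,x^{n-1}$ with coefficients polynomial in $h$. This leaves $K_{1,0},\dots,K_{n-1,0}$, i.e. $\widetilde K_i=K_{i+1,0}$.

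Otherwise $\cos\theta\ne0$ unless $\theta=\pi/2$. In that case $y=\tan\theta\,(x-c)$, so $x$ is an affine function of $y$ at the endpoints, and every $x^iy^j$ at an endpoint becomes a polynomial in $y$. The constant term cancels in differences, so each $K_{i,j}$ is a combination of $K_{0,m}$. Substituting $x$ into $\frac12y^2+F(x)=h$ gives a degree-$n$ polynomial relation in $y$ with constant-coefficient leading term, and with the other coefficients polynomial in $h$. This reduces $y^m$ for $m\ge n$, leaving $K_{0,1},\dots,K_{0,n-1}$.

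The case $\theta=\pi/2$ ($x=c$ on both endpoints) is special. There $x$ is constant, so $K_{i,j}=c^iK_{0,j}$. The relation $y^2=2h-2F(c)$ then reduces everything further, and the result is still within the span of $K_{0,1},\dots,K_{0,n-1}$.

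The delicate point I expect is checking that the bookkeeping of degrees keeps the index range to $0\le i\le n-2$ in all three sums. I would verify this by tracking the weighted degree (weight $n$ for $y^2$, $1$ for $x$) through each reduction step.
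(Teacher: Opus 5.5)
Your proposal is correct and follows essentially the paper's route. You split off the closed-orbit part, which Lemma \ref{lmm1} handles. You then reduce the arc integrals while keeping the boundary terms: your $\mathrm{d}(x^my^3)$ identity is Lemma \ref{st1}(ii) with $j=1$, and even-$j$ arcs become $K_{m,0}$ as in \eqref{q6}. Finally, you collapse all $K_{i,j}$ through the endpoint relations exactly as in Lemma \ref{lm3}, \eqref{q4} and \eqref{q3}.

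The only differences are cosmetic. You lower odd $y$-powers by direct substitution $y^{2k+1}=(2h-2F)^ky$ instead of using Lemma \ref{st1}(i). The index-range bookkeeping you flag is automatic, since the theorem asserts no degree bounds on $p_i,q_i,s_i$. For $n=2$, note that the leading coefficient $\tfrac12+b_2/k^2$ is nonzero because $b_2>0$ when a period annulus exists.
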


Theorem \ref{mth1} shows that for system \eqref{e1} any Abelian integral along orbital arcs $\Gamma_h^\pm$ 
can be expressed as a linear combination of generating functions $I_{i,1}(h)$, $I_{i,1}^+(h)$ and $\widetilde K_i(h)$,
$i=0,1,\dots,n-2$.
By \cite{Petrov1984} we know that Abelian integrals along orbits $\Gamma_h$  can be expressed 
as  linear combinations of $I_{i,1}(h)$, $0\le i\le n-2$.
To prove Theorem \ref{mth1}, 
in Lemma \ref{st1} we prove that any Abelian integral along $\Gamma_h^+$ can be expressed
as a linear combination of integrals $I_{i,1}^+(h)$ and $K_{i,j}(h)$.
Furthermore, the algebraic structure of functions $K_{i,j}(h)$ is also given in Section \ref{sec2}. 

Picard-Fuchs equations of $\mbox{\boldmath $X$}(h)={\mbox{\rm col}}(I_{0,1}(h),\,I_{1,1}(h),\,\ldots,\,I_{n-2,1}(h))$ have been extensively applied 
in the study of bifurcations of limit cycles for smooth polynomial perturbations.
For more information see the book \cite{CL2007} and references therein.
For the sake of convenience, the explicit expression of Picard-Fichs equations of $\mbox{\boldmath $X$}(h)$ is presented in Section \ref{sec2}.

For $\widetilde{\mbox{\boldmath $X$}}(h)={\mbox{\rm col}}(I^+_{0,1}(h),\,I^+_{1,1}(h),\,\ldots,\,I^+_{n-2,1}(h))$, 
the associated Picard-Fuchs equations are given in the following theorem.

\begin{thm} \label{st2}
Let $H(x,y)=\frac{1}{2}y^2+b_1x + b_2x^2 + \cdots + b_n x^n$, where $b_n\neq0$, $n\geq2$.
Suppose that for any $h\in(\alpha,\beta)$ the orbital arc $\Gamma_h^+$ of the unperturbed system \eqref{e1}$|_{\varepsilon=0}$ 
is from the point $(x_1(h),y_1(h))$ to the point $(x_2(h),y_2(h))$. 
 Then the column vector function $\widetilde{\mbox{\boldmath $X$}}(h)$ satisfies the Picard-Fuchs equations
\begin{equation}\label{t5}
2(\mbox{\boldmath $T$}_1+\mbox{\boldmath $T$}_2\mbox{\boldmath $T$}_4^{-1}\mbox{\boldmath $T$}_3)(\widetilde{\mbox{\boldmath $X$}}'(h)-
\mbox{\boldmath $J$}(h)) + 2\mbox{\boldmath $T$}_2\mbox{\boldmath $T$}_4^{-1}\mbox{\boldmath $K$}(h)=\widetilde{\mbox{\boldmath $X$}}(h),
\end{equation}
where $\mbox{\boldmath $T$}_1$, $\mbox{\boldmath $T$}_2$, $\mbox{\boldmath $T$}_3$ and $\mbox{\boldmath $T$}_4$ are 
matrices given in \eqref{st3}, and
\begin{equation}\label{e4}
\begin{split}
\mbox{\boldmath $K$}(h) =&\, {\mbox{\rm col}}(K_{0,1}(h),\,K_{1,1}(h),\,\ldots,\, K_{n-1,1}(h)),\\
\mbox{\boldmath $J$}(h) =&\, {\mbox{\rm col}}(J_0(h),\,J_1(h),\,\ldots,\,J_{n-2}(h)),\\
\end{split}
\end{equation}
with
$J_i(h) = x_2^i(h) y_2(h)x'_2(h) - x_1^i(h) y_1(h)x'_1(h)$.
\end{thm}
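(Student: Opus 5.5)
We adapt the classical derivation of Picard--Fuchs equations for closed-orbit integrals $I_{i,1}(h)$ to the arcs $\Gamma_h^+$. The new ingredient is that integrals of exact forms no longer vanish; they give the boundary terms $K_{i,1}(h)$. Differentiating in $h$ with moving endpoints produces the terms $J_i(h)$.

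\textbf{Step 1: differentiation formula.} Write $I^+_{i,1}(h)=\int_{x_1(h)}^{x_2(h)} x^i y(x,h)\,\mathrm{d}x$ along the branch $y=y(x,h)$ of $\frac12y^2+F(x)=h$, or piecewise over the branches composing $\Gamma_h^+$. On each branch $\partial y/\partial h=1/y$. The Leibniz rule then gives
\begin{equation*}
(I^+_{i,1})'(h)=I^+_{i,-1}(h)+x_2^iy_2x_2'-x_1^iy_1x_1',
\end{equation*}
that is, $I^+_{i,-1}(h)=(I^+_{i,1})'(h)-J_i(h)$. The contributions at interior turning points (where $y=0$) cancel or vanish. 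This explains the combination $\widetilde{\mbox{\boldmath $X$}}'-\mbox{\boldmath $J$}$ in \eqref{t5}.

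\textbf{Step 2: two families of linear relations.} First, on $\Gamma_h^+$ we have $y^2=2h-2F(x)$, so
\begin{equation*}
I^+_{i,1}=\int_{\Gamma_h^+}x^i\,\frac{y^2}{y}\,\mathrm{d}x=2hI^+_{i,-1}-2\sum_k b_kI^+_{i+k,-1}.
\end{equation*}
Second, since $y\,\mathrm{d}y=-F'(x)\,\mathrm{d}x$ on the level curve, we have $\mathrm{d}(x^{i}y)=ix^{i-1}y\,\mathrm{d}x-x^{i}F'(x)y^{-1}\,\mathrm{d}x$. Integrating over $\Gamma_h^+$ gives
\begin{equation*}
K_{i,1}=iI^+_{i-1,1}-\sum_k kb_kI^+_{i+k-1,-1},
\end{equation*}
and, after substituting the first relation, $iI^+_{i-1,1}$ can be expressed through $I^+_{\cdot,-1}$ as well. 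Taking $i=0,\dots,n-1$ yields $n$ relations. These let us eliminate the high-index integrals $I^+_{n-1,-1},\dots,I^+_{2n-3,-1}$ that appear in the first family for $i\le n-2$. The coefficients of these relations are exactly the matrices $\mbox{\boldmath $T$}_1,\dots,\mbox{\boldmath $T$}_4$ of \eqref{st3}. Concretely, the first family reads
\begin{equation*}
\widetilde{\mbox{\boldmath $X$}}=2(\mbox{\boldmath $T$}_1\mbox{\boldmath $Y$}_1+\mbox{\boldmath $T$}_2\mbox{\boldmath $Y$}_2),
\end{equation*}
with $\mbox{\boldmath $Y$}_1=\mathrm{col}(I^+_{0,-1},\dots,I^+_{n-2,-1})$ and $\mbox{\boldmath $Y$}_2$ collecting the higher indices. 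The second family reads
\begin{equation*}
\mbox{\boldmath $T$}_4\mbox{\boldmath $Y$}_2=\mbox{\boldmath $T$}_3\mbox{\boldmath $Y$}_1+\mbox{\boldmath $K$},
\end{equation*}
where signs are absorbed into the $\mbox{\boldmath $T$}$'s. Here $\mbox{\boldmath $T$}_4$ is triangular with diagonal entries multiples of $b_n\neq0$, so it is invertible.

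\textbf{Step 3: elimination.} Substituting $\mbox{\boldmath $Y$}_2=\mbox{\boldmath $T$}_4^{-1}(\mbox{\boldmath $T$}_3\mbox{\boldmath $Y$}_1+\mbox{\boldmath $K$})$ gives
\begin{equation*}
\widetilde{\mbox{\boldmath $X$}}=2(\mbox{\boldmath $T$}_1+\mbox{\boldmath $T$}_2\mbox{\boldmath $T$}_4^{-1}\mbox{\boldmath $T$}_3)\mbox{\boldmath $Y$}_1+2\mbox{\boldmath $T$}_2\mbox{\boldmath $T$}_4^{-1}\mbox{\boldmath $K$}.
\end{equation*}
Inserting $\mbox{\boldmath $Y$}_1=\widetilde{\mbox{\boldmath $X$}}'-\mbox{\boldmath $J$}$ from Step 1 yields \eqref{t5}.

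\textbf{Main obstacle.} The delicate part is bookkeeping. The second family must be arranged so that the unknowns $I^+_{n-1,-1},\dots,I^+_{2n-3,-1}$ are solved with an invertible $\mbox{\boldmath $T$}_4$, and the extra row $i=n-1$ (hence $K_{n-1,1}$) must be handled consistently with the index shifts coming from $iI^+_{i-1,1}$. A second point needs checking: the endpoint terms in Step 1 must be justified when $\Gamma_h^+$ passes through points with $y=0$. I would handle this by parametrizing $\Gamma_h^+$ by $x$ piecewise, or by working with $y\,\mathrm{d}x$ as a form whose $h$-derivative is $\mathrm{d}x/y$ away from the endpoints.
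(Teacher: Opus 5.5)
Your proposal is correct and is essentially the paper's own proof. Step~1 is the paper's identity $I^+_{i,-1}(h)=\frac{\mathrm{d}}{\mathrm{d}h}I^+_{i,1}(h)-J_i(h)$, your two families are \eqref{t3} and \eqref{t4} at $j=-1$ (your $\mathrm{d}(x^iy)$ relation plus the first family is exactly Lemma~\ref{st1}(i) plus \eqref{t3}), and the elimination through the lower-triangular $\mbox{\boldmath $T$}_4$ is the same.

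The only slip is the index range. The high-index unknowns are $I^+_{n-1,-1},\ldots,I^+_{2n-2,-1}$, which is $n$ of them, because row $i=n-2$ of the first family contains $b_nI^+_{2n-2,-1}$. So your $n$ relations for $i=0,\ldots,n-1$ form exactly the square $n\times n$ system with $\mbox{\boldmath $T$}_4$, and there is no ``extra row'' to reconcile. If you take $\mbox{\boldmath $Y$}_2=-\mathrm{col}(I^+_{n-1,-1},\ldots,I^+_{2n-2,-1})$, both of your displayed relations hold with the matrices of \eqref{st3} verbatim.
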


\begin{Rm}
In Theorem \ref{st2}, it is assumed that $\Gamma_h^+$ is an orbital arc contained by the periodic orbit $\Gamma_h$ for system \eqref{e1}.
From its proof, we can see that Picard-Fuchs equations \eqref{t5} still hold 
for any continuous family of orbital arcs of the unperturbed system  of \eqref{e1}.  
\end{Rm}



\subsection{Asymptotic expansion of Melnikov functions}
Next, we will discuss how to apply Picard-Fuchs equations \eqref{t5} 
to study bifurcations of limit cycles in system \eqref{e1}.
For Poincar\'e bifurcation of the period annulus $\mathcal{A}$, if we do not have
the explicit expressions of $x_i(h)$ and $y_i(h)$ from 
\begin{equation}\label{e5}
L(x_i,y_i,\theta)=0, \quad H(x_i,y_i)=h,\quad h\in(\alpha,\beta),
\end{equation}
it could be very difficult to study the properties of Abelian integrals $I_{i,1}^+(h)$ 
on the interval $(\alpha,\beta)$ by equations \eqref{t5}.
Note that if the $x$-axis is the separation line, then $y_1(h)=y_2(h)\equiv0$, which implies that 
$\mbox{\boldmath $K$}(h) = \mbox{\boldmath $J$}(h) \equiv 0$ by \eqref{e4}.
Then Picard-Fuchs equations \eqref{t5} become homogeneous,
and can be used for Poincar\'e bifurcation just like for smooth polynomial perturbations,
 because  $2\widetilde{\mbox{\boldmath $X$}}(h)=\mbox{\boldmath $X$}(h)$ in this case.

It is more feasible to derive the asymptotic expansions of $x_i(h)$ and $y_i(h)$ at $h=h_0$ from \eqref{e5}, where $\alpha\le h_0\le \beta$.
Then we can compute the asymptotic expansion of $\widetilde{\mbox{\boldmath $X$}}(h)$ at $h_0$ by Picard-Fuchs equations \eqref{t5},
which makes it possible for us to derive the asymptotic expansion of $\widetilde M(h)$.
Therefore, we can study isolated zeros of $\widetilde M(h)$ near $h=h_0$ to determine limit cycles bifurcating near $\Gamma_{h_0}$ in system \eqref{e1},
where $\Gamma_{h_0}$ could be a center or a homoclinic/heteroclinic loop.

For homoclinic or heteroclinic bifurcation of limit cycles in piecewise smooth near-Hamiltonian systems, 
there are some papers about the compuation of the corresponding asymptotic expansions of Melnikov functions
with a fixed separation line (see \cite{LHR2012,LHZ2013,LiuHan2023,WeiZhang2016,XH2021} for example),
where formulas are given only for the first few coefficients.

Assume that $\Gamma_{\beta}$ represents a homoclinic loop for $h_0=\beta$.
We can apply Theorems \ref{mth1} and \ref{st2} 
to recursively compute the coefficients of the asymptotic expansion of  $\widetilde M(h)$ near $h=\beta$. 
The method is illustrated in Section \ref{sec3}.
It is worthy to mention that this method can be extented to the cases 
where $\Gamma_{h_0}$ is a center or a heteroclinic loop of \eqref{e1}$|_{\varepsilon=0}$.

As an application, to show the effects of piecewise smooth perturbations on homoclinic bifurcation of limit cycles, 
we consider smooth and piecewise smooth polynomial perturbations for the quartic Hamiltonian system
\begin{equation}\label{hsys}
\dot x= y,\quad \dot y = x^3(1-x),
\end{equation}
respectively. The Hamiltonian $H(x,y)$ becomes into 
$$H(x,y)=\frac{1}{2} y^2 - \frac{1}{4} x^4 + \frac{1}{5} x^5.$$
System \eqref{hsys} has an elementary center $(1,0)$ and a nilpotent saddle $(0,0)$.
Surrounding $(1,0)$ there are a continuous family of  periodic orbits $\Gamma_{h},\,h\in(-\frac{1}{20},0)$, 
which are bounded by the homoclinic loop $\Gamma_{0}$ passing through the origin.

For smooth polynomial perturbations of system \eqref{hsys}, we consider the following near-Hamiltonian system
\begin{equation}\label{sys1}
\dot{x} = y + \sum_{k=1}^{+\infty} \varepsilon^{k} P_k(x,y),\quad \dot{y} = x^3(1-x) +  \sum_{k=1}^{+\infty} \varepsilon^{k} Q_k(x,y),
\end{equation}
where $|\varepsilon| \ll 1$,  $P_k(x,y)$ and $Q_k(x,y)$ are quartic polynomials given by
$$P_k(x,y)=\sum_{i+j=0}^{4} a_{ijk} x^i y^j,\quad Q_k(x,y)=\sum_{i+j=0}^{4} b_{ijk} x^i y^j$$
with the coefficients $a_{ijk}$ and $b_{ijk}$ as free parameters.

Then a bifurcation function $d(h,\varepsilon)$ of system \eqref{sys1} has the following expansion
\begin{equation}\label{g1}
d(h,\varepsilon)=\varepsilon M_1(h)+\varepsilon^2 M_2(h)+\cdots +\varepsilon^k M_k(h)+\cdots,\quad |\varepsilon| \ll 1,
\end{equation}
 where
\begin{equation*}
M_1(h)=\oint_{\Gamma_h}Q_1(x,y)\mathrm{d}x-P_1(x,y)\mathrm{d}y,\quad h\in\Big(-\frac{1}{20},0\Big).
\end{equation*}
It is well known that one simple zero of the first nonvanishing Melnikov function $M_k(h)$ in \eqref{g1} for $0<-h\ll 1$ corresponds to a limit cycle
of system \eqref{sys1} produced near $\Gamma_0$ for $|\varepsilon|$ sufficiently small.

By studying $M_1(h)$, we get the following theorem.

\begin{thm}\label{thm1}
Let \eqref{g1} hold.  If $M_1(h)\not\equiv0$, there exists $0 < \varepsilon_0\ll 1$ such
that  $M_1(h)$ has at most $5$ zeros (counting multiplicity) on the interval $h\in(-\varepsilon_0, 0)$.
This upper bound can be reached for simplie zeros with proper values of parameters.
Furthermore, $M_1(h)\equiv0$ if and only if
\begin{equation}\label{g2}
\begin{split}
 a_{121 }&= -3 b_{031},\quad  b_{111 }= -2 a_{201},\quad a_{101}= -b_{011},\\
 3b_{131}&= -2 a_{221},\quad  b_{211 }= -3 a_{301},\quad  b_{311 }= -4 a_{401}.
\end{split}
\end{equation}
\end{thm}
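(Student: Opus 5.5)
We plan to reduce $M_1(h)$ to a combination of generating functions and then expand it at the homoclinic value $h=0$. Here $H=\frac12y^2-\frac14x^4+\frac15x^5$, so $n=5$.

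\emph{Reduction.} By Green's formula, $M_1(h)=-\iint_{\mathrm{int}\,\Gamma_h}(P_{1x}+Q_{1y})\,dx\,dy$, which is an integral of a cubic polynomial over the interior of $\Gamma_h$. The interior is symmetric in $y$, so only monomials $x^iy^{2m}$ survive. Writing these as $\oint x^iy^{2m+1}dx$ and using $y^2=2h+\frac12x^4-\frac25x^5$ together with the standard reduction (Theorem \ref{mth1} with no separation line, or Petrov's result \cite{Petrov1984}), we will obtain
$$M_1(h)=\sum_{i=0}^{3}p_i(h)I_{i,1}(h),$$
with $\deg p_i$ small and the coefficients of $p_i$ linear in $a_{ij1},b_{ij1}$. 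Direct bookkeeping should show that the linear combinations that actually appear are exactly the expressions $a_{121}+3b_{031}$, $b_{111}+2a_{201}$, $a_{101}+b_{011}$, $3b_{131}+2a_{221}$, $b_{211}+3a_{301}$, $b_{311}+4a_{401}$, or combinations of them. For example, $P_x+Q_y$ contains $(a_{10}+b_{01})+(2a_{20}+b_{11})x+\dots$. This gives the ``if'' part of \eqref{g2}.

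\emph{Expansion at $h=0$.} Next we compute the expansions of $I_{i,1}(h)$ for $0<-h\ll1$ near the nilpotent saddle loop. Near the cusp-like saddle, $y^2\approx 2h+\frac12x^4$, so the expansions contain terms $|h|^{k/4}$ and also $h^k\ln|h|$ type terms. We will obtain them recursively using the Picard--Fuchs system for $\mbox{\boldmath $X$}(h)$ from Section \ref{sec2}: we plug in an ansatz $\sum c_k |h|^{k/4}+\sum d_k h^k\ln|h|$ and determine the coefficients from the ODE, fixing the free constants by the local integrals near $x=0$ (the values $I_{i,1}(0)$ are explicit Beta integrals). Then $M_1(h)=\sum_k \mu_k \phi_k(h)$ in an ordered asymptotic scale. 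We expect six independent coefficients $\mu_0,\dots,\mu_5$, each a linear form in the six quantities above, with a nonsingular matrix relating them.

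\emph{Conclusion.} If the six-by-six determinant is nonzero, then $M_1\equiv0$ forces all six combinations to vanish, which is the ``only if'' part of \eqref{g2}. Moreover, the first nonzero $\mu_k$ has index at most $5$, and the standard argument (dividing by leading terms and applying Rolle's theorem in the asymptotic scale, or counting sign changes) gives at most $5$ zeros near $0$, counting multiplicity. Since the $\mu_k$ can be chosen independently, alternating signs with $|\mu_0|\ll|\mu_1|\ll\dots$ produces $5$ simple zeros.

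\emph{Main obstacle.} The hard step is the asymptotic expansion: identifying the correct scale of functions (fractional powers from the degenerate $x^4$ saddle versus logarithms) and verifying that the six coefficients are linearly independent. I would first try computing the expansion directly from the local integral $\int dx/\sqrt{2h+x^4/2}$ and the Picard--Fuchs recursion, then check the determinant symbolically.
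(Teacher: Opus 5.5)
Your proposal is correct and follows essentially the paper's route: your Green's-formula reduction is the paper's $\oint_{\Gamma_h}\big(Q_1+\int P_{1x}\,dy\big)dx$, which yields the same six combinations $B_{01},\dots,B_{51}$ (with $I_{0,3},I_{1,3}$ reduced to the $I_{i,1}$); the paper then expands the $I_{i,1}$ at $h=0$ via the Picard--Fuchs recursion and concludes with the rank/Rolle argument of Theorem~\ref{th4}. The one input you leave open is that the resulting $6\times 6$ determinant is a nonzero multiple of $k_1^2k_3$, so you must show that the $|h|^{3/4}$ and $|h|^{5/4}$ coefficients $k_1,k_3$ of $I_{0,1}$ are nonzero; the paper takes this ($k_1<0$, $k_3>0$) from \cite{HYX2012} rather than computing it from local integrals.
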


For $M_1(h)\equiv0$, by $M_2(h)$ we have the following theorem.

\begin{thm}\label{thm2}
Let \eqref{g1} hold. If $M_1(h)\equiv0$ and $M_2(h)\not\equiv0$, then 
$M_2(h)$ has at most $11$ zeros (counting multiplicity) for $0<-h\ll 1$.
This upper bound can be reached by proper perturbations in system \eqref{sys1}.
\end{thm}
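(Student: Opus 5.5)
Since $M_1\equiv0$, conditions \eqref{g2} hold. The plan is to compute $M_2$ by the standard second-order formula, write it in terms of generating functions, expand it at $h=0$, and count zeros from the expansion.

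\emph{Step 1: the formula for $M_2$.} Under \eqref{g2} the one-form $Q_1\,\mathrm{d}x-P_1\,\mathrm{d}y$ is exact modulo $\mathrm{d}H$. So we can write $Q_1\,\mathrm{d}x-P_1\,\mathrm{d}y=\mathrm{d}G_1+g_1\,\mathrm{d}H$ with polynomials $G_1,g_1$. Because $M_1\equiv0$, Françoise's algorithm gives
\begin{equation*}
M_2(h)=\oint_{\Gamma_h}\bigl(Q_2\,\mathrm{d}x-P_2\,\mathrm{d}y\bigr)-\oint_{\Gamma_h} g_1\bigl(Q_1\,\mathrm{d}x-P_1\,\mathrm{d}y\bigr).
\end{equation*}
Concretely, \eqref{g2} kills the divergence-type terms. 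The remaining free parameters of $P_1,Q_1$ enter $g_1$ linearly, so the second integral is a quadratic form in the degree-one parameters. The result is a polynomial one-form of higher degree, roughly degree $8$ in $x$.

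\emph{Step 2: reduction to generating functions.} By Petrov's result quoted after Theorem \ref{mth1}, with $n=5$, every integral $\oint x^iy^j\,\mathrm{d}x$ reduces to
\begin{equation*}
M_2(h)=\sum_{i=0}^{3}p_i(h)I_{i,1}(h),
\end{equation*}
where the degrees of the polynomials $p_i$ grow with the degree of the form. I will carry out this reduction explicitly using the recursions $x^{i}y^{j}\mathrm{d}x \sim$ (combinations) obtained from $y^2=2h+\frac12x^4-\frac25x^5$.

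\emph{Step 3: expansion at the homoclinic loop.} Next I will compute the asymptotic expansion of the $I_{i,1}$ at $h=0$. Since the saddle is nilpotent of type $x^4$, the expansion contains terms of the form $h^{k}$, $h^{k+1/4}$ (or $h^{k+3/4}$), $h^{k+1/2}$ and $h^k\ln|h|$. The coefficients can be obtained recursively from the Picard–Fuchs system for $\boldsymbol X$ in Section \ref{sec2}, seeded by a few directly computed leading terms. Substituting into Step 2 gives
\begin{equation*}
M_2(h)=\sum_l c_l\,\phi_l(h),
\end{equation*}
where the $\phi_l$ are ordered by growth as $h\to0^-$ and the $c_l$ are polynomial in the parameters.

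\emph{Step 4: zero count and realizability.} I will show that the first twelve coefficients $c_0,\dots,c_{11}$, viewed as functions of the parameters of $P_2,Q_2$ and the quadratic terms from $P_1,Q_1$, have Jacobian of rank $12$. I will also show that $M_2\not\equiv0$ forces some $c_l\ne0$ with $l\le11$. The upper bound $11$ then follows from the standard lemma: if $c_0=\dots=c_{k-1}=0\ne c_k$, there are at most $k$ zeros near $0$. Realizability comes from choosing the $c_l$ with alternating signs and successively small magnitudes, using the independence.

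\emph{Main obstacle.} The hard part is Step 4's claim that vanishing of $c_0,\dots,c_{11}$ forces $M_2\equiv0$. The worry is that the quadratic contribution from $g_1$ could create new independent directions beyond twelve. To handle this, I will parametrize $M_2$ as a linear combination of a finite set of independent functions; the Step 2 representation already gives this. I will count its effective dimension, expected to be $12$, and verify by computer algebra that the linear map to $(c_0,\dots,c_{11})$ is invertible.
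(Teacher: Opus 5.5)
Your Steps 1--3 follow the paper, but your Step 4 linearizes the problem differently, and the route is sound. The paper solves $c_{02}=\dots=c_{52}=0$ for six second-order coefficients and then freezes $A=(A_1,\dots,A_4)$ of \eqref{g2b}. For fixed $A$, the remaining $c_{i2}$ are linear in $a_{201},a_{221},a_{401},b_{031},a_{021},a_{041}$, with Jacobian a nonzero multiple of $k_1k_3A_4^6$. So Theorem~\ref{th4} applies when $A_4\neq0$, and the cases with $A_4=0$ have to be examined separately.

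You instead look for a single $12$-dimensional linear space of functions containing every possible $M_2$. That gives the bound in one application of Theorem~\ref{th4}, uniformly in all parameters (also near $A_4=0$), with no case split.

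Your claim is true, but the Step 2 representation does not exhibit it. The representation \eqref{xx4} involves $13$ independent functions, so counting them would only give the bound $12$. The drop to $12$ is an identity you must establish. One clean way to see it:
\begin{itemize}
\item $r_1$ is even in $y$, so $\oint r_1(Q_1\mathrm{d}x-P_1\mathrm{d}y)=\oint r_1\,\mathrm{d}R^-$. Here $R^-$ is the $y$-odd part of $R_1$, of degree $\le5$ with first-order coefficients.
\item Along $\Gamma_h$ one has $\mathrm{d}r_1=-(A_1+2A_2x+A_3x^2+A_4y^2)\,\mathrm{d}x$. Hence the quadratic part equals $\oint R^-(A_1+2A_2x+A_3x^2+A_4y^2)\,\mathrm{d}x$.
\item By Corollary~\ref{lmm3} this lies in $\mathcal{S}+\mathrm{span}\{I_{k,3}\ (k\le4),\,I_{0,5},\,I_{1,5},\,I_{2,5},\,I_{0,7}\}$, where $\mathcal{S}=\mathrm{span}\{I_{0,1},hI_{0,1},I_{1,1},hI_{1,1},I_{2,1},I_{3,1}\}$. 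The space $\mathcal{S}$ also contains $\oint(Q_2\mathrm{d}x-P_2\mathrm{d}y)$.
\item Modulo $\mathcal{S}$, the $I_{k,3}$ contribute only $hI_{2,1}$ and $hI_{3,1}$. So this space, which does not depend on $A$, has dimension at most $6+2+4=12$.
\end{itemize}

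What your route does not give for free is sharpness. $M_2$ is quadratic in the first-order data, so \emph{independence} alone does not show that the required alternating, successively small values of $(c_0,\dots,c_{11})$ are attained. You need a slice on which the parameter map is linear and onto, e.g.\ fixed $A$ with $A_4\neq0$. That is exactly what the paper's determinant computation provides.

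Two small corrections:
\begin{itemize}
\item The expansion has no $|h|^{k+1/2}$ terms. Its terms are $h^k$, $h^k\ln|h|$ $(k\ge1)$, $|h|^{3/4}h^k$ and $|h|^{5/4}h^k$, with exponents $0,\frac34,1,\frac54$ coming from the eigenvalues $0,15,20,25$ of $\mathbf{A}_0$ divided by $20$.
\item Fran\c{c}oise's formula carries $+\oint r_1(Q_1\mathrm{d}x-P_1\mathrm{d}y)$. Your sign slip is harmless, since $P_2,Q_2$ are free.
\end{itemize}
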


It is difficult to study the asymptotic expansion of $M_3(h)$ in \eqref{g1} for  $0<-h\ll 1$,
because the computation of its coefficients involves too many parameters 
$a_{ijk}$ and $b_{ijk}$, $k=1,2,3$.


For piecewise smooth perturbations of system \eqref{hsys}, we study perturbations in the form of system \eqref{e1}.
To be precise, we consider the following system  
\begin{equation}\label{sys}
\begin{split}
(\dot{x},\dot{y})=\left\{
\begin{aligned}
(y+\varepsilon \widetilde P^+(x,y),\,\,&x^3(1-x)+\varepsilon \widetilde Q^+(x,y)) ,\quad (x,y)\in\Sigma^+,\\[1ex]
(y+\varepsilon \widetilde P^-(x,y),\,\,&x^3(1-x)+\varepsilon \widetilde Q^-(x,y)) ,\quad (x,y)\in\Sigma^-,
\end{aligned}
\right.
\end{split}
\end{equation}
where $|\varepsilon|\ll1$,
\begin{equation*}\label{sp1}
\Sigma^\pm=\{(x,y)\in \mathbb{R}^2\,|\,\pm( \sin(\theta)(x-1)-\cos(\theta) y)>0\},\,0<\theta\leq\pi,
\end{equation*}
and $\widetilde P^\pm(x,y)$ and $\widetilde Q^\pm(x,y)$ are quartic polynomials in $(x,y)$.
In system \eqref{sys} the separation line passes through the center $(1,0)$,
and has two intersection points with $\Gamma_h$ for each $h\in(\frac{1}{20},0]$. 

In order to find limit cycles bifurcating near $\Gamma_0$ in system \eqref{sys}, we shall investigate 
the number of isolated zeros of Melnikov function
\begin{equation}\label{g3}
\widetilde{M}(h)=\,\int_{\Gamma_h^+}\widetilde Q^+(x,y)\mathrm{d}x-\widetilde P^+(x,y)\mathrm{d}y 
+\int_{\Gamma_h^-}\widetilde Q^-(x,y)\mathrm{d}x-\widetilde P^-(x,y)\mathrm{d}y
\end{equation}
for $0<-h\ll 1$, where $\Gamma^\pm_{h}=\Gamma_h\cap\Sigma^\pm$.
By studying the coefficients of the corresponding asymptotic expansion of $\widetilde M(h)$, 
we have the following theorem.

\begin{thm}\label{th3}
Let \eqref{g3} hold for system \eqref{sys}. If $\widetilde{M}(h)\not\equiv0$, there exists $0<\varepsilon_0\ll 1$
such that on the interval $h\in(-\varepsilon_0,0)$ $\widetilde{M}(h)$ has at most $19$ zeros (counting multiplicity) for $\theta=\arctan\frac{121\sqrt{6}}{100}$,
or at most $14$ zeros (counting multiplicity) for $\theta=\frac{\pi}{2}$, or at most $12$ zeros (counting multiplicity) for $\theta=\pi$.
This upper bound is sharp.
\end{thm}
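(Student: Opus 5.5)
Our plan is to reduce $\widetilde M(h)$ to finitely many generating functions and compute its asymptotic expansion at $h=0$ as far as needed, following Section~\ref{sec3}. Here $n=5$, $F(x)=-\frac14x^4+\frac15x^5$. By Theorem~\ref{mth1} applied to \eqref{g3},
\[
\widetilde M(h)=\sum_{i=0}^{3}p_i(h)I_{i,1}(h)+\sum_{i=0}^{3}q_i(h)I^+_{i,1}(h)+\sum_{i=0}^{3}s_i(h)\widetilde K_i(h),
\]
with $\widetilde K_i=K_{0,i+1}$ for $\theta\ne\pi$ and $\widetilde K_i=K_{i+1,0}$ for $\theta=\pi$, where the separation line is $y=0$. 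The coefficients of $p_i,q_i,s_i$ are linear in the coefficients of $\widetilde P^\pm,\widetilde Q^\pm$. First we record the degrees of these polynomials and count the independent parameters.

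Next we obtain the expansions at $h=0$. The intersection points $(x_k(h),y_k(h))$ of $\Gamma_h$ with $\sin\theta(x-1)=\cos\theta\, y$ are solved from \eqref{e5} near $h=0$, where they lie away from the saddle. They are analytic in $h$, possibly in $(-h)^{1/2}$ for $\theta=\pi$ (no: for $\theta=\pi$ the line is $y=0$ and both points are regular there, with $x_1$ near the origin). We compute them as power series, which gives $\boldsymbol J(h)$ and $\boldsymbol K(h)$ as explicit series. For $I_{i,1}$ near the nilpotent saddle loop, the expansion has the form $\sum_k c_k h^k+\sum_k d_k (-h)^{k+3/4}+\sum_k e_k(-h)^{k+1/4}$, with possibly $h^k\ln|h|$ terms depending on the local normal form $y^2/2-x^4/4$. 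We fix this form from the local integral and determine the unknown coefficients recursively from the Picard--Fuchs system for $\boldsymbol X$. For $\widetilde{\boldsymbol X}$ we use Theorem~\ref{st2}: the half-arc $\Gamma_h^+$ passes near the saddle, except when $\theta=\pi$ (there $\widetilde{\boldsymbol X}=\frac12\boldsymbol X$ and the system is homogeneous). We substitute an ansatz with the same singular exponents plus a regular part into \eqref{t5}. The inhomogeneous terms feed only the analytic part, so the singular coefficients are proportional to those of $\boldsymbol X$, while the analytic coefficients are determined recursively, up to finitely many free constants that we compute numerically or in closed form (e.g. the values $I^+_{i,1}(0)$).

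We then collect $\widetilde M(h)=\sum_{\lambda} C_\lambda(-h)^\lambda$ over the exponents $\lambda\in\{0,\frac14,\frac34,1,\frac54,\dots\}$, each $C_\lambda$ linear in the parameters. For each $\theta$ we compute the rank of the coefficient map and find the first $N$ such that $C_{\lambda_0},\dots,C_{\lambda_{N-1}}$ are independent and any $C_{\lambda_N}$ is a linear combination of them. The expected values are $N=19,14,12$. The upper bound follows from the standard argument: if the first nonzero coefficient is $C_{\lambda_k}$ with $k\le N$, then at most $k$ zeros occur near $0$. Vanishing of all of $C_{\lambda_0},\dots,C_{\lambda_N}$ should force $\widetilde M\equiv0$, which we prove by showing that the reduced generating combination vanishes identically, using independence of the generating functions. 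Sharpness follows by choosing alternating-sign, successively small coefficients, $|C_{\lambda_0}|\ll\cdots\ll|C_{\lambda_{N}}|$, which is possible by the independence established above.

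\emph{Main obstacle.} The hard step is the rank computation. We need enough terms (around order $(-h)^{5}$ or more) and exact handling of the special slope $\tan\theta=\frac{121\sqrt6}{100}$. This value is presumably where an extra coefficient degeneracy or independence appears, and it must be detected from a determinant in $\theta$. We would first compute the coefficients symbolically in $\theta$, then find the roots of the relevant minors, and verify the special value there.
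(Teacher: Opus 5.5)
Your overall route matches the paper's: reduce by Theorem~\ref{mth1}, expand the generating functions at $h=0$ with the Picard--Fuchs systems, then run a rank argument as in Theorem~\ref{th4}. However, the local analysis that fixes the form of the expansion is wrong in both directions.

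For $\theta=\pi$, the endpoint $(x_1(h),0)$ of $\Gamma_h^+$ tends to the nilpotent saddle, where $H_x=-x^3+x^4$ vanishes to third order, so it is not regular. From $x^4(\frac14-\frac15x)=|h|$ one gets $x_1(h)=\sqrt2|h|^{1/4}+\cdots$, which is analytic only in $|h|^{1/4}$ (see \eqref{a6}). Hence $\widetilde K_i=K_{i+1,0}=x_2^{i+1}-x_1^{i+1}$ produces terms $h^j|h|^{1/4}$, $h^j|h|^{1/2}$, $h^j|h|^{3/4}$ at every order. The expansion for $\theta=\pi$ therefore has five families per order, and your exponent list misses $\frac12$. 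The bound $12$ comes from the rank $13$ of $c_0,\dots,c_{12}$ in exactly this expansion; treating $K_{i,0}$ as analytic gives a different expansion and a different count.

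Conversely, for $0<\theta<\pi$ you say $\Gamma_h^+$ passes near the saddle. In fact $L(0,0,\theta)=-\sin\theta<0$, so the saddle lies in $\Sigma^-$ and $\overline{\Gamma_0^+}$ is a compact arc avoiding it. Thus $\widetilde{\mbox{\boldmath $X$}}(h)$ is analytic at $h=0$ and has no singular part. Your ansatz with singular coefficients ``proportional to those of $\mbox{\boldmath $X$}$'' is unjustified (the constant is $0$), and nothing in your plan determines it. What is actually needed are the constants that the recursion from \eqref{xx2} leaves free at its singular point $h=0$ (since $P(0)=0$). These are $\widetilde{\mbox{\boldmath $X$}}(0)$ and $\widetilde{\mbox{\boldmath $X$}}'(0)$, which are computed as explicit integrals over $\Gamma_0^+$ (cf.\ \eqref{zz1}). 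All higher Taylor coefficients then follow recursively.

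Your counting rule also fails. For $\theta=\frac{\pi}{2}$ and $\theta=\arctan\frac{121\sqrt6}{100}$, once $\bar c_0,\dots,\bar c_8$ (resp.\ $\hat c_0,\dots,\hat c_8$) vanish, the coefficients of $I_{0,1},\dots,I_{3,1}$ are forced to zero. Every later coefficient of $h^j|h|^{3/4}$, $h^{j+1}\ln|h|$, $h^j|h|^{5/4}$ then vanishes automatically, while the coefficients $\bar c_{4j+3}$ of the analytic terms $h^{j+1}$ stay independent. So $\bar c_9$ is already dependent but $\bar c_{11}$ is not. Stopping at the first dependent coefficient gives the wrong number, and it does not cover the case where the first nonzero coefficient lies beyond that point.

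You must apply Theorem~\ref{th4} with a gapped index set. For $\theta=\frac{\pi}{2}$ this is $\bar c_0,\dots,\bar c_8,\bar c_{11},\bar c_{15},\dots,\bar c_{31}$: $15$ coefficients, expansion up to $h^{8}$. For the other angle it is $\hat c_0,\dots,\hat c_8,\hat c_{11},\dots,\hat c_{51}$: $20$ coefficients, up to $h^{13}$. Both go well beyond order $5$.

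Two smaller points. $N$ independent leading coefficients give at most $N-1$ zeros, so your ``expected $N=19,14,12$'' is off by one. Also, $\tan\theta=\frac{121\sqrt6}{100}$ is not a degeneracy to be found through minors in $\theta$. It is chosen only so that $x_a=\frac{11}{10}$ and $y_a=\frac{121}{1000}\sqrt6$ are explicit, and the full-rank condition is checked directly at that value.
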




The paper is organized as follows:
In Section \ref{sec2}, we obtain the explicit Picard-Fuchs equations for Abelian integrals $I_{i,j}(h)$ and
present the proof for Theorems \ref{mth1} and \ref{st2} to extend the method of Picard-Fuchs equations
to piecewise smooth polynomial perturbations.
In Section \ref{sec3}, we discuss the method of using Picard-Fuchs equations to compute the asymptotic expansion of generating functions near a homoclinic loop. 
In Sections \ref{sec4} and \ref{sec5}, we shall prove Theorems \ref{thm1}, \ref{thm2} and \ref{th3}, respectively.


\section{Picard-Fuchs equations of integrals $I_{i,j}(h)$ and $I_{i,j}^+(h)$}\label{sec2}
In this section, we assume that the Hamiltonian $H(x,y)$ has the following form
\begin{equation}\label{b1a}
H(x,y)=\frac{1}{2}y^2+b_1x + b_2x^2 + \cdots + b_n x^n,\,\,b_n\neq0,\,\,n\geq2.
\end{equation}
For Abelian integrals $I_{i,j}(h)$ and $I_{i,j}^+(h)$ in \eqref{h4a}, we shall present algebraic structures
and  the explicit expression of  the related Picard-Fuchs equations in the following two subsections, respectively.

\subsection{Homogeneous Picard-Fuchs equations of integrals $I_{i,j}(h)$}
In 1984, Petrov \cite{Petrov1984} proved that for the Hamiltonian \eqref{b1a}
any Abelian integral $I(h)$ along periodic orbits $\Gamma_h$ can be expressed as
\begin{equation}\label{g4}
I(h)=\sum_{k=0}^{n-2}p_k(h)I_{k,1}(h),
\end{equation}
where $p_k(h)$, $k=0,1,\ldots,n-2$ are polynomials in $h$. 
A procedure is shown in \cite{NY2001} for computing  \eqref{g4} and the related Picard-Fuchs equations
by using Gelfand-Leray residue.
For conveniece of the reader, we give the following lemma for \eqref{g4}
and the explicit expresion of Picard-Fuchs equations in Theorem \ref{lmm2}.

\begin{lm}\label{lmm1}
Let \eqref{b1a} hold.
For Abelian integrals $I_{i,j}(h)$ in \eqref{h4a} we have:\\[1ex]
{\rm(i)} $I_{i,j}(h)\equiv0$ for $j$ even;\\[1ex]
{\rm(ii)} for $i\ge-1$ and $j\ge -1$,
\begin{center}
$(i+1)I_{i,j+2}(h)=(j+2)\sum\limits_{k=1}^{n}kb_kI_{i+k,j}(h);$
\end{center}
{\rm(iii)}  for $i\ge -1$,
\begin{center}
$(2i+3n+2)b_nI_{i+n,1}(h)=(2i+2)hI_{i,1}(h)- \sum\limits_{k=1}^{n-1}(2i+3k+2)b_kI_{i+k,1}(h).$
\end{center}
\end{lm}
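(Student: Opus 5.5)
The plan is to derive all three parts from Green's formula (equivalently, Stokes' theorem on the closed curve $\Gamma_h$) together with the relation $y\,\mathrm{d}y=-F'(x)\,\mathrm{d}x$ on $H=h$, where $F(x)=\sum_{k=1}^n b_kx^k$. Since $\Gamma_h$ is closed, $\oint_{\Gamma_h}\mathrm{d}(G)=0$ for every polynomial $G(x,y)$; this is the only identity needed. We also use the symmetry of $\Gamma_h$ under $y\mapsto -y$, which holds because $H$ is even in $y$.

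For (i), the reflection $(x,y)\mapsto(x,-y)$ maps $\Gamma_h$ onto itself and reverses its orientation. The form $x^iy^j\,\mathrm{d}x$ is invariant under this map when $j$ is even. The integral therefore equals its own negative, so it vanishes. If one prefers an explicit computation, write $\oint_{\Gamma_h}x^iy^j\,\mathrm{d}x=\int_{x_{\min}}^{x_{\max}}x^i\big(y_+^j-y_-^j\big)\,\mathrm{d}x$ with $y_-=-y_+$; the integrand is zero for even $j$.

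For (ii), apply exactness to $G=x^{i+1}y^{j+2}$:
$$0=\oint_{\Gamma_h}\mathrm{d}(x^{i+1}y^{j+2})=(i+1)I_{i,j+2}+(j+2)\oint_{\Gamma_h} x^{i+1}y^{j+1}\,\mathrm{d}y.$$
On $\Gamma_h$ we have $y\,\mathrm{d}y=-F'(x)\,\mathrm{d}x=-\sum_k kb_kx^{k-1}\,\mathrm{d}x$. Substituting gives
$$\oint_{\Gamma_h} x^{i+1}y^{j+1}\,\mathrm{d}y=-\sum_k kb_kI_{i+k,j},$$
and the stated identity follows. The range $i,j\ge-1$ keeps every exponent nonnegative: $x^{i+1}$ and $y^{j}$ with $j\ge -1$ appear only in the combination $y^{j+1}\,\mathrm{d}y$. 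The case $j=-1$ involves $I_{i+k,-1}=\oint x^{i+k}y^{-1}\,\mathrm{d}x$. This integral is still well defined, either as $\oint x^{i+k}\,\mathrm{d}t$ along the flow or as an improper integral, and the identity $y^{0}\,\mathrm{d}y=-F'(x)y^{-1}\,\mathrm{d}x$ remains valid on the curve.

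For (iii), combine (ii) with $y^2=2h-2F(x)$. Write $I_{i,3}=\oint x^iy\,(2h-2F)\,\mathrm{d}x=2hI_{i,1}-2\sum_kb_kI_{i+k,1}$. From (ii) with $j=1$ we have $(i+1)I_{i,3}=3\sum_k kb_kI_{i+k,1}$. Equating the two expressions for $(i+1)I_{i,3}$ gives
$$2(i+1)hI_{i,1}=\sum_k\big(3k+2i+2\big)b_kI_{i+k,1}.$$
Isolating the $k=n$ term yields the formula in (iii).

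The only delicate point is the boundary case $i=-1$ or $j=-1$, where negative powers appear. I would handle it by interpreting the integrals via the time parametrization, in which $\mathrm{d}x=y\,\mathrm{d}t$ and $\mathrm{d}y=-F'(x)\,\mathrm{d}t$. In this form every integrand is smooth along the orbit, and the exactness argument goes through unchanged.
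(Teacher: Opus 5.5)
Your proposal is correct and follows the paper's proof essentially step for step. You use the symmetry of $\Gamma_h$ for (i), the relation $y\,\mathrm{d}y=-F'(x)\,\mathrm{d}x$ together with $\oint_{\Gamma_h}\mathrm{d}(x^{i+1}y^{j+2})=0$ (which the paper phrases as Green's theorem) for (ii), and elimination of $I_{i,3}$ via $y^2=2h-2F(x)$ for (iii); the only difference is that the paper eliminates $I_{i,j+2}$ for general $j$ and then sets $j=1$. Your treatment of $j=-1$ through the time parametrization is a useful addition that the paper leaves implicit, but for $i=-1$ the $x^{-1}$ terms are harmless only because they carry the factor $i+1=0$ — the time parametrization does not remove a singularity at $x=0$.
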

\begin{proof}
Because every periodic orbit $\Gamma_h$ of system $\mathrm{d}H=0$  
is symmetric with respect to the $x$-axis, we can easily get the statement (\textrm{i}).

(ii) Because $y\mathrm{d}y + (b_1+2b_2x +\cdots + n b_n x^{n-1})\mathrm{d}x=0$ along $\Gamma_h$, for $i\ge -1$ and $j\ge -1$ we obtain
\begin{equation}\label{g5}
\oint_{\Gamma_h}x^{i+1}y^{j+1}\mathrm{d}y +\oint_{\Gamma_h} (b_1+2b_2x +\cdots + n b_n x^{n-1})x^{i+1}y^j\mathrm{d}x=0.
\end{equation}
On the other hand, by Green's Theorem we have
\begin{equation}\label{g6}
\oint_{\Gamma_h}x^{i+1}y^{j+1}\mathrm{d}y=-\frac{i+1}{j+2}I_{i,j+2}(h).
\end{equation}
Then substituting \eqref{g6} into \eqref{g5} yields the statement (ii).

(iii) Because $y^2=2h-2b_1x - 2b_2x^2 - \cdots - 2b_nx^n$ along $\Gamma_h$,
for $i\ge -1$ and $j\ge -1$ we have
\begin{equation}\label{g7}
I_{i,j+2}(h)=2hI_{i,j}(h)-2b_1I_{i+1,j}(h)-2b_2I_{i+2,j}(h)-\cdots-2b_nI_{i+n,j}(h).
\end{equation}
Then eliminating $I_{i,j+2}(h)$ from the statement (ii) and \eqref{g7} we get
\begin{equation}\label{g8}
\begin{split}
-2(i+1)hI_{i,j}+(2i+j+4)b_1I_{i+1,j}
&+(2i+2j+6)b_2I_{i+2,j}\\
+\cdots&+(2i+nj+2n+2)b_nI_{i+n,j}=0.
\end{split}
\end{equation}
Taking $j=1$ for \eqref{g8} we can get the statement (iii).
\end{proof}

By Lemma \ref{lmm1}, we can get the explicit expression of a linear combination of 
$I_{0,1}(h)$, $I_{1,1}(h)$, $\ldots$, $I_{n-2,1}(h)$ for any Abelian integral $I_{i,j}(h)$, $i+j\ge 1$.
The related Picard-Fuchs equations are given in the next theorem.

\begin{thm}\label{lmm2}
Let \eqref{b1a} hold and $\mbox{\boldmath $X$}(h)=\mbox{\rm col}(I_{0,1}(h),\,I_{1,1}(h),\,\ldots,\,I_{n-2,1}(h))$. Then
the column vector function $\mbox{\boldmath $X$}(h)$ satisfies the Picard-Fuchs equations
\begin{equation}\label{g9}
2(\mbox{\boldmath $T$}_1+\mbox{\boldmath $T$}_2\mbox{\boldmath $T$}_4^{-1}\mbox{\boldmath $T$}_3)\mbox{\boldmath $X$}'(h)=\mbox{\boldmath $X$}(h),
\end{equation}
where  $'$ represents the derivative with respect to $h$, and
\begin{equation}\label{st3}
\begin{split}
\mbox{\boldmath $T$}_1 =&
\left( \begin{array}{cccc}
h & -b_1 & \cdots & -b_{n-2} \\
0 & h & \cdots & -b_{n-3} \\
\vdots & \vdots & \ddots & \vdots \\
0 & 0 & \cdots & h
\end{array} \right),\qquad\quad
\mbox{\boldmath $T$}_2 =
\left( \begin{array}{ccccc}
b_{n-1} & b_n & \cdots &0& 0 \\
b_{n-2} & b_{n-1} & \cdots & 0& 0 \\
\vdots & \vdots & \ddots&\vdots & \vdots \\
b_1 & b_2 & \cdots & b_{n-1} &b_n
\end{array} \right),\\
\mbox{\boldmath $T$}_3 =&
\left( \begin{array}{cccc}
b_1 & 2b_2 & \cdots & (n-1)b_{n-1} \\
-2h & 3b_1 & \cdots & nb_{n-2} \\
0   & -4h & \cdots  & (n+1)b_{n-3} \\
\vdots & \vdots & \ddots & \vdots \\
0 & 0 & \cdots & -2(n-1)h
\end{array} \right),\\
\mbox{\boldmath $T$}_4 = &
\left( \begin{array}{cccc}
nb_{n} & 0 & \cdots & 0 \\
(n+1)b_{n-1} & (n+2)b_{n} & \cdots & 0 \\
\vdots & \vdots & \ddots&\vdots  \\
(2n-1)b_1 & 2nb_2 & \cdots & (3n-2)b_n
\end{array} \right).
\end{split}
\end{equation}
\end{thm}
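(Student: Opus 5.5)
The proof rests on two facts about the Abelian integrals. The first is the derivative formula $I'_{i,j}(h)=jI_{i,j-2}(h)$, which holds because $\partial y/\partial h=1/y$ along $H=h$. The second is the relation \eqref{g8} of Lemma \ref{lmm1}. The plan is to write $I_{i,1}$ in terms of $I_{k,-1}=I'_{k,1}$ using two families of identities. The first family involves indices $0,\ldots,n-2$ and will produce $\mbox{\boldmath $T$}_1$ and $\mbox{\boldmath $T$}_2$. The second family involves indices $n-1,\ldots,2n-2$ and will produce $\mbox{\boldmath $T$}_3$ and $\mbox{\boldmath $T$}_4$. Eliminating the high-index derivatives then gives \eqref{g9}.

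\emph{First family.} Substitute $y^2=2h-2\sum_{k}b_kx^k$ into $I_{i,1}=\oint x^iy^2\,y^{-1}\mathrm{d}x$. This is \eqref{g7} with $j=-1$:
$$I_{i,1}=2hI_{i,-1}-2\sum_{k=1}^nb_kI_{i+k,-1}=2\Big(hI'_{i,1}-\sum_{k=1}^{n}b_kI'_{i+k,1}\Big),\qquad i=0,\ldots,n-2.$$
Split the sum. The terms with $i+k\le n-2$ carry coefficient $-b_{k}$ on the low-index derivatives, and together with $h$ on the diagonal they form the upper triangular matrix $\mbox{\boldmath $T$}_1$. The terms with $n-1\le i+k\le 2n-2$ involve $\mbox{\boldmath $Y$}'={\rm col}(I'_{n-1,1},\ldots,I'_{2n-2,1})$. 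Up to the sign convention, the coefficient of $I'_{n-1+m,1}$ in row $i$ is $b_{n-1-i+m}$, and these entries are exactly $\mbox{\boldmath $T$}_2$. This gives $\mbox{\boldmath $X$}=2(\mbox{\boldmath $T$}_1\mbox{\boldmath $X$}'+\mbox{\boldmath $T$}_2\mbox{\boldmath $Y$}')$, where the sign of the $\mbox{\boldmath $T$}_2$ term has to be fixed by checking it against the second family below.

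\emph{Second family.} Take \eqref{g8} with $j=-1$ and $i=-1,0,\ldots,n-2$:
$$-2(i+1)hI'_{i,1}+\sum_{k=1}^n(2i+k+2)b_kI'_{i+k,1}=0.$$
For $i=-1$ the $h$-term vanishes and the first row reads $b_1,2b_2,\ldots$. Each subsequent row has $-2(i+1)h$ on the subdiagonal. Collecting the coefficients of $I'_{0,1},\ldots,I'_{n-2,1}$ gives $\mbox{\boldmath $T$}_3$, and collecting the coefficients of $I'_{n-1,1},\ldots,I'_{2n-2,1}$ gives the lower triangular matrix $\mbox{\boldmath $T$}_4$. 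For instance, row $i=-1$ has $nb_n$ on $I'_{n-1,1}$, and row $i$ has diagonal entry $(2i+n+2)b_n$, matching $(n+2)b_n,\ldots,(3n-2)b_n$. So $\mbox{\boldmath $T$}_3\mbox{\boldmath $X$}'+\mbox{\boldmath $T$}_4\mbox{\boldmath $Y$}'=0$. Since $\mbox{\boldmath $T$}_4$ is lower triangular with nonzero diagonal $(n+2m)b_n$, it is invertible, and $\mbox{\boldmath $Y$}'=-\mbox{\boldmath $T$}_4^{-1}\mbox{\boldmath $T$}_3\mbox{\boldmath $X$}'$.

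Substituting into the first family gives $\mbox{\boldmath $X$}=2(\mbox{\boldmath $T$}_1\pm\mbox{\boldmath $T$}_2\mbox{\boldmath $T$}_4^{-1}\mbox{\boldmath $T$}_3)\mbox{\boldmath $X$}'$. The remaining work is bookkeeping of signs and index ranges. The coefficient of $I'_{i+k,1}$ in the first family is $-b_k$, while the displayed $\mbox{\boldmath $T$}_2$ has positive entries, so the first family actually reads $\mbox{\boldmath $X$}=2(\mbox{\boldmath $T$}_1\mbox{\boldmath $X$}'-\mbox{\boldmath $T$}_2\mbox{\boldmath $Y$}')$. Combined with $\mbox{\boldmath $Y$}'=-\mbox{\boldmath $T$}_4^{-1}\mbox{\boldmath $T$}_3\mbox{\boldmath $X$}'$, this yields the plus sign in \eqref{g9}. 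The main obstacle is checking carefully that the coefficient arrays match the displayed matrices \eqref{st3} entry by entry, in particular the first row of $\mbox{\boldmath $T$}_3$ (the case $i=-1$) and the truncation of the $b_k$ bands at the matrix edges.
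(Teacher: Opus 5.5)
Your proposal is correct and is essentially the paper's proof. Taking \eqref{g7} with $j=-1$ and $i=0,\ldots,n-2$ gives $2(\mbox{\boldmath $T$}_1\mbox{\boldmath $X$}'-\mbox{\boldmath $T$}_2\mbox{\boldmath $Y$}')=\mbox{\boldmath $X$}$, taking \eqref{g8} with $j=-1$ and $i=-1,\ldots,n-2$ gives $\mbox{\boldmath $T$}_3\mbox{\boldmath $X$}'+\mbox{\boldmath $T$}_4\mbox{\boldmath $Y$}'=0$, and eliminating $\mbox{\boldmath $Y$}'$ yields \eqref{g9}; the provisional ``$\pm$'' on the $\mbox{\boldmath $T$}_2$ term is settled by the coefficients $-b_k$ in the first family alone, not by comparison with the second family, as you note at the end.
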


\begin{proof}
Note that $I_{i,-1}(h)=I'_{i,1}(h)$ for $i\geq0$.
Taking $j=-1$ and $i=0,1,\ldots,n-2$ for \eqref{g7} we get $n-1$ equations, which can be written as
\begin{equation}\label{g10}
2(\mbox{\boldmath $T$}_1\mbox{\boldmath $X$}'(h)-\mbox{\boldmath $T$}_2\mbox{\boldmath $Y$}'(h))=\mbox{\boldmath $X$}(h),
\end{equation}
where $\mbox{\boldmath $Y$}(h)=\mbox{\rm col}(I_{n-1,1}(h),\,I_{n,1}(h),\,\ldots,\,I_{2n-2,1}(h))$.
Similarly, taking $i=-1,\ldots,n-2$ and $j=-1$ for \eqref{g8} we get
\begin{equation}\label{g11}
\mbox{\boldmath $T$}_3\mbox{\boldmath $X$}'(h)+\mbox{\boldmath $T$}_4\mbox{\boldmath $Y$}'(h)=0.
\end{equation}
Then eliminating $\mbox{\boldmath $Y$}'(h)$ from \eqref{g10} and \eqref{g11} we get \eqref{g9}.
\end{proof}


\subsection{Proof of Theorems \ref{mth1} and \ref{st2}}
In this subsection, to prove Theorems \ref{mth1} and \ref{st2} for piecewise smooth system \eqref{e1} 
we extend the method of computing the algebraic structure of Abelian integrals
and Picard-Fuchs equations for smooth perturbations in the previous subsection.

Note that $\widetilde{M}(h)$ in \eqref{e2} can be rewritten as
\begin{equation}\label{xx1}
\widetilde{M}(h)= \oint_{\Gamma_h}Q^-(x,y)\mathrm{d}x - P^-(x,y)\mathrm{d}y + \widetilde M^+(h),
\end{equation}
where
\begin{equation*}
\widetilde M^+(h)=\int_{\Gamma_h^+} \big(Q^+(x,y)-Q^-(x,y)\big) \mathrm{d}x - \big(P^+(x,y)-P^-(x,y)\big)\mathrm{d}y.
\end{equation*}
By Lemma \ref{lmm1}, the first integral in \eqref{xx1} can be expressed as a linear combination of Abelian integrals $I_{j,1}(h)$, $j=0,1,\ldots,n-2$.
For the second integral $\widetilde M^+(h)$, we have
\begin{equation}\label{h4}
\begin{split}
\int_{\Gamma^+_h} x^iy^j\mathrm{d}y =& \int_{\Gamma^+_h} x^i\mathrm{d}\Big(\frac{1}{j+1}y^{j+1}\Big)\\
=& - \frac{i}{j+1} I^+_{i-1,j+1}(h) + \frac{1}{j+1}K_{i,j+1}(h),
\end{split}
\end{equation}
where $I_{i-1,j+1}^+(h)$ and $K_{i,j+1}(h)$ are given by \eqref{h4a}.
Then $\widetilde M^+(h)$ can be expressed as a linear combination of integrals $I^+_{i,j}(h)$ and $K_{i,j+1}(h)$
for $1\le i+j\le n$.


To prove Theorem \ref{mth1}, we need to study the algebraic structure for integrals $I^+_{i,j}(h)$ and $K_{i,j}(h)$, respectively.
For Abelian integrals $I^+_{i,j}(h)$ we have the following lemma.

\begin{lm} \label{st1}
Let \eqref{h4a} and \eqref{b1a} hold.
Then for Abelian integrals $I^+_{i,j}(h)$ the following identities hold:
\begin{itemize}
\item[{\rm(i)}] for $i\ge-1$ and $j\ge -1$,
\begin{center}
$(i+1)I^+_{i,j+2}(h)=(j+2)\sum\limits_{k=1}^{n}kb_kI_{i+k,j}^+(h)+K_{i+1,j+2}(h)$;
\end{center}
\item[{\rm(ii)}]  for $i\ge -1$ and $j=0,1$,
\begin{equation*}
\begin{split}
&\,(2i +nj + 2n+2)b_nI^+_{i+n,j}(h)\\
=&\, (2i+2)hI^+_{i,j}(h)- \sum\limits_{k=1}^{n-1}(2i+kj+2k+2)b_kI_{i+k,j}^+(h)- K_{i+1,j+2}(h).
\end{split}
\end{equation*}
\end{itemize}
\end{lm}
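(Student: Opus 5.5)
The plan is to repeat the proof of Lemma \ref{lmm1} with the arc $\Gamma_h^+$ in place of the closed orbit $\Gamma_h$. Along an arc, Green's theorem is no longer available. Its role will be played by integration by parts, and integration by parts leaves a boundary term. That boundary term is exactly $K_{i+1,j+2}(h)=\int_{\Gamma_h^+}\mathrm{d}(x^{i+1}y^{j+2})$, which is where the extra $K$'s in (i) and (ii) come from. By contrast, the symmetry statement (i) of Lemma \ref{lmm1} has no analogue here, since $\Gamma_h^+$ is not symmetric.

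For (i), I will use that along $\Gamma_h^+$ we have $y\,\mathrm{d}y+(b_1+2b_2x+\cdots+nb_nx^{n-1})\mathrm{d}x=0$. Multiplying by $x^{i+1}y^j$ and integrating over $\Gamma_h^+$ gives
\begin{equation*}
\int_{\Gamma_h^+}x^{i+1}y^{j+1}\mathrm{d}y+\sum_{k=1}^n kb_kI^+_{i+k,j}(h)=0 .
\end{equation*}
Next I rewrite the first integral via $x^{i+1}y^{j+1}\mathrm{d}y=\frac{1}{j+2}\big(\mathrm{d}(x^{i+1}y^{j+2})-(i+1)x^iy^{j+2}\mathrm{d}x\big)$, in the same way as \eqref{h4}. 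This yields
\begin{equation*}
\int_{\Gamma_h^+}x^{i+1}y^{j+1}\mathrm{d}y=\frac{1}{j+2}\big(K_{i+1,j+2}(h)-(i+1)I^+_{i,j+2}(h)\big).
\end{equation*}
Substituting this and multiplying through by $j+2$ (which is nonzero because $j\ge-1$) gives (i) directly.

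For (ii), I substitute $y^2=2h-2\sum_{k=1}^n b_kx^k$ on $\Gamma_h^+$ into $I^+_{i,j+2}$. This is the analogue of \eqref{g7}:
\begin{equation*}
I^+_{i,j+2}(h)=2hI^+_{i,j}(h)-2\sum_{k=1}^n b_kI^+_{i+k,j}(h).
\end{equation*}
I then multiply by $i+1$ and eliminate $(i+1)I^+_{i,j+2}$ using (i). Collecting the coefficient of $b_kI^+_{i+k,j}$ gives $(j+2)k+2(i+1)=2i+kj+2k+2$. The relation becomes $\sum_{k=1}^n(2i+kj+2k+2)b_kI^+_{i+k,j}=2(i+1)hI^+_{i,j}-K_{i+1,j+2}$. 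Isolating the $k=n$ term gives the stated identity.

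The algebra itself is routine. The only point needing care is that every integral is well defined. For $j\ge0$ the integrands are polynomial, so the integrals over the compact arc are finite, which covers the cases $j=0,1$ in (ii). In (i) with $j=-1$ the integrand $x^{i+k}y^{-1}\mathrm{d}x$ can become singular where $y=0$ on the arc. There I would note that $y^{-1}\mathrm{d}x=\mathrm{d}y/(-F'(x))$ is regular at regular points of $\Gamma_h^+$, or else interpret the integrals as $h$-derivatives, as in the proof of Theorem \ref{lmm2}. I would state this as a standing convention and not argue it further.
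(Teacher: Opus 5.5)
Your proposal is correct and follows the paper's proof essentially verbatim: the relation $y\,\mathrm{d}y+F'(x)\,\mathrm{d}x=0$ on $\Gamma_h^+$, combined with the integration by parts \eqref{h4}, gives (i), and eliminating $I^+_{i,j+2}$ between (i) and the $y^2$-substitution \eqref{t3} gives (ii). One caution on your side remark for $j=-1$: regularity of $y^{-1}\mathrm{d}x=-\mathrm{d}y/F'(x)$ is the right justification, but reading $I^+_{i,-1}$ as an $h$-derivative is not valid on arcs without a boundary correction, since $I^+_{i,-1}(h)=\frac{\mathrm{d}}{\mathrm{d}h}I^+_{i,1}(h)-J_i(h)$.
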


\begin{proof}
(i) Because $y\mathrm{d}y + (b_1+2b_2x +\cdots + n b_n x^{n-1})\mathrm{d}x=0$ along $\Gamma_h^+$, for $i\ge -1$ and $j\ge -1$ we obtain
\begin{equation}\label{t1}
\int_{\Gamma^+_h}x^{i+1}y^{j+1}\mathrm{d}y +\int_{\Gamma^+_h} (b_1+2b_2x +\cdots + n b_n x^{n-1})x^{i+1}y^j\mathrm{d}x=0.
\end{equation}
On the other hand, by \eqref{h4} we directly have
\begin{equation}\label{t2}
\int_{\Gamma^+_h}x^{i+1}y^{j+1}\mathrm{d}y=-\frac{i+1}{j+2}I^+_{i,j+2}(h) + \frac{1}{j+2}K_{i+1,j+2}(h).
\end{equation}
Then substituting \eqref{t2} into \eqref{t1} yields the statement (i).

(ii) Because $y^2=2h-2b_1x - 2b_2x^2 - \cdots - 2b_nx^n$ along $\Gamma^+_h$,
for $i\ge -1$ and $j\ge -1$ we have
\begin{equation}\label{t3}
I^+_{i,j+2}(h)=2hI^+_{i,j}(h)-2b_1I^+_{i+1,j}(h)-2b_2I^+_{i+2,j}(h)
-\cdots-2b_nI^+_{i+n,j}(h).
\end{equation}
Then eliminating $I^+_{i,j+2}(h)$ by the statement (i) and \eqref{t3} we get
\begin{equation}\label{t4}
\begin{split}
-2(i+1)hI^+_{i,j}+(2i&+j+4)b_1I^+_{i+1,j}
+(2i+2j+6)b_2I^+_{i+2,j}\\
+\cdots&+(2i+nj+2n+2)b_nI^+_{i+n,j} + K_{i+1,j+2}=0,
\end{split}
\end{equation}
which yields the statement (ii) for $j=0,1$.
\end{proof}

For the algebraic structure of functions $K_{i,j}(h)$, we have the following lemma.

\begin{lm}\label{lm3}
Let \eqref{h4a} and \eqref{b1a} hold. Suppose that the separation line of system \eqref{e1} is given by $y=k(x-c)$, where $k\neq0$. Then the following identities hold:

\rm(\romannumeral1) $k^iK_{i,j}(h)=\sum\limits_{s=0}^i
\left(\!\!\begin{array}{c}i\\ s \end{array}\!\!\right)
(kc)^s K_{0,i+j-s}(h)$ for $i\geq1$;

\rm(\romannumeral2) $b_nK_{0,j+n}(h)= k^nhK_{0,j}(h) - \frac{1}{2}k^nK_{0,j+2}(h)-\sum\limits_{i=0}^{n-1}f_ik^{n-i}K_{0,j+i}(h)$ for $j\geq0$,
where $f_i=\frac{1}{i!}\frac{\partial^i}{\partial x^i}H(c,0)$, $1\le i\le n-1$, and $f_0=H(c,0)$.
\end{lm}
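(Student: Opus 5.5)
The key observation is that $K_{i,j}(h)=\int_{\Gamma_h^+}\mathrm{d}(x^iy^j)$ is an exact integral, so it depends only on the endpoints of the arc:
\[
K_{i,j}(h)=x_2^iy_2^j-x_1^iy_1^j .
\]
Here $(x_1,y_1)$ and $(x_2,y_2)$ are the two intersection points of $\Gamma_h$ with the separation line $y=k(x-c)$. Both identities are then purely algebraic relations between monomials evaluated at these points. Writing $G(x,y)=x^iy^j$, it suffices to prove the corresponding pointwise identity for every point $(x,y)$ satisfying $y=k(x-c)$ and $H(x,y)=h$. Applying it at $(x_2,y_2)$ and at $(x_1,y_1)$ and subtracting then gives the identity for the $K$'s, by linearity in the monomials.

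For (i), on the line we have $kx=y+kc$. Hence
\[
k^ix^iy^j=(y+kc)^iy^j=\sum_{s=0}^i\binom{i}{s}(kc)^s y^{i+j-s}.
\]
Evaluating at both endpoints and subtracting gives $k^iK_{i,j}=\sum_s\binom{i}{s}(kc)^sK_{0,i+j-s}$. Note that for $s=i$ with $j=0$ the term is $K_{0,0}=1-1=0$, which is consistent. The argument needs $i\ge1$ only so that the statement is nontrivial.

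For (ii), expand $F(x)=H(x,0)=\sum_{m=0}^n f_m(x-c)^m$ by Taylor's formula at $x=c$. Then $f_m=\frac{1}{m!}F^{(m)}(c)$, and in particular $f_n=b_n$ and $f_0=H(c,0)$. On the line, $x-c=y/k$, so the energy relation $H=h$ becomes
\[
\tfrac12y^2+\sum_{m=0}^{n}f_m k^{-m}y^m=h .
\]
Multiplying by $k^ny^j$ gives
\[
b_ny^{j+n}=k^nhy^j-\tfrac12k^ny^{j+2}-\sum_{m=0}^{n-1}f_mk^{n-m}y^{j+m}.
\]
Evaluating at both endpoints and subtracting yields (ii) for $j\ge0$.

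The only point needing care is the bookkeeping in (ii). One must check that the $m=n$ Taylor coefficient is exactly $b_n$. One must also check the sign conventions: in particular, the $h$ term contributes $hK_{0,j}$, and for $j=0$ this is $h(1-1)=0$, which is harmless. No genuine obstacle is expected beyond verifying this expansion.
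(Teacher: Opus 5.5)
Your proposal is correct and follows essentially the same route as the paper. Like the paper, you write $K_{i,j}(h)=x_2^iy_2^j-x_1^iy_1^j$, use $kx=y+kc$ with the binomial expansion for (i), and for (ii) Taylor-expand $H(x,0)$ at $c$, substitute $x-c=y/k$, multiply by $k^ny^j$, use $f_n=b_n$, and subtract the endpoint relations.
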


\begin{proof}
Let $(x_1(h),y_1(h))$ to $(x_2(h),y_2(h))$ be the starting point and the ending point of the orbit arc $\Gamma^+_h$, respectively.
Then $K_{i,j}(h)= x_2^i(h)y_2^j(h)-x_1^i(h)y_1^j(h)$ by \eqref{h4a}.
Since $y_2(h)=k(x_2(h)-c)$ and $y_1(h)=k(x_1(h)-c)$, for $i\geq1$ we have
\begin{equation*}
\begin{split}
k^iK_{i,j}(h)
&=\, k^i(x_2^i(h)y_2^j(h)-x_1^i(h)y_1^j(h))\\
&=\, (y_2(h)+kc)^iy_2^j(h)-(y_1(h)+kc)^iy_1^j(h).\\
\end{split}
\end{equation*}
Then expanding the equation above yields the statement (i).

By $y_2(h)=k(x_2(h)-c)$, it is easy to get that
\begin{equation}\label{h8a}
\begin{split}
h &=\, H(x_2(h),y_2(h))=\, \frac{1}{2}y_2^2(h) + \sum\limits_{i=0}^n f_i(x_2(h)-c)^i\\
&=\, \frac{1}{2}y_2^2(h) + \sum\limits_{i=0}^n f_ik^{-i}y_2^i(h)\\
&=\, \frac{1}{k^n}\Big(\frac{1}{2}k^ny_2^2(h) + \sum\limits_{i=0}^n f_ik^{n-i}y_2^i(h)\Big).
\end{split}
\end{equation}
Then multiplying $y_2^j(h)$ on both sides of \eqref{h8a} yields
\begin{equation}\label{h8}
\begin{split}
\frac{1}{2}k^ny_2^{j+2}(h) + \sum\limits_{i=0}^n f_ik^{n-i}y_2^{j+i}(h) = k^nhy_2^{j}(h),\quad j\geq0.
\end{split}
\end{equation}
Similarly, from $h=H(x_1(h),y_1(h))$  and $y_1(h)=k(x_1(h)-c)$, we have
\begin{equation}\label{h9}
\begin{split}
\frac{1}{2}k^ny_1^{j+2}(h) + \sum\limits_{i=0}^n f_ik^{n-i}y_1^{j+i}(h) = k^nhy_1^{j}(h)\quad j\geq0.
\end{split}
\end{equation}
Note that $f_n=b_n$. Subtracting \eqref{h9} from \eqref{h8} yields the statement \rm(\romannumeral2).
\end{proof}

Note that the separation line in system \eqref{e1} is given by $L(x,y,\theta)=0$, where $L(x,y,\theta)=\sin(\theta)(x-c)-\cos(\theta) y$, $\theta\in(0,\pi]$.
By Lemma \ref{lm3}, any $K_{i,j}(h)$ can be expressed as
\begin{equation*}
K_{i,j}(h)=\sum_{s=1}^{n-1}r_{i,j,s}(h)K_{0,s}(h)\,\, \mbox{for}\,\,\theta\in\Big(0,\frac{\pi}{2}\Big)\cup\Big(\frac{\pi}{2},\pi\Big),
\end{equation*}
where $r_{i,j,s}(h)$, $1\le s\le n-1$, are polynomials in $h$ with $\mbox{deg}(r_s)\le (i+j-s)/n$.
For the case of $\theta = \frac{\pi}{2}$ (i.e. the separation line is $x=c$), we have
\begin{equation}\label{q4}
\begin{split}
&{\rm(\romannumeral1)}\,K_{i,j}(h)\equiv0\,\,{\rm for} \,\, j\,\, {\rm even};\\
&{\rm(\romannumeral2)}\,  K_{i,j}(h)=c^iK_{0,j}(h) \,\,{\rm for}\,\,i\geq1;\\
&{\rm(\romannumeral3)}\, K_{0,j+2}(h)=2\Big(h-\sum\limits_{i=1}^{n}b_ic^i\Big)K_{0,j}(h) \,\,{\rm for}\,\,j\geq0.
\end{split}
\end{equation}
For the case of $\theta = \pi$ (i.e. the separation line is $y=0$), we have
\begin{equation}\label{q3}
\begin{split}
&{\rm(\romannumeral1)}\,K_{i,j}(h)\equiv0 \,\,{\rm for}\,\,j\geq1;\\
&{\rm(\romannumeral2)}\,b_nK_{n+j,0}(h)= hK_{j,0}(h) - \sum\limits_{i=1}^{n-1}b_iK_{i+j,0}(h) \,\,{\rm for}\,\,j\geq0.
\end{split}
\end{equation}
Identities in \eqref{q4} and \eqref{q3} can be similarly proved as in the proof of Lemma \ref{lm3}.

\begin{proof}[Proof of Theorem \ref{mth1}]
By Lemma \ref{st1}, any Abelian integral $I^+_{i,j}(h)$ can be expressed as a linear combination
 of $I^+_{k,1}(h)$, $0\le k\le n-2$ and $K_{r,s}(h)$ with $2r+ns \le 2i+nj+2$, where by \eqref{h4a} we have taken into account
\begin{equation}\label{q6}
I^+_{k,0}(h) = \frac{1}{k+1}K_{k+1,0}(h).
\end{equation}
By Lemma \ref{lm3}, \eqref{q4} and \eqref{q3}, for any $\theta\in(0,\pi]$ any $K_{i,j}(h)$ can be expressed as
\begin{equation*}
K_{i,j}(h)=\sum_{s=1}^{n-1}\tilde r_{i,j,s}(h)\widetilde K_{s}(h), 
\end{equation*}
where coefficients $\tilde r_{i,j,s}(h)$ are polynomials in $h$.
Then further using Lemma \ref{lmm1} and \eqref{h4}, $\widetilde{M}(h)$ in \eqref{xx1} can be written
into the form \eqref{h4b}.
\end{proof}

Next, we give the proof of Theorem \ref{st2}.

\begin{proof}[Proof of Theorem \ref{st2}]
Note that $I^+_{i,-1}(h)=\frac{{\mathrm d}}{{\mathrm d}h}I^+_{i,1}(h) - J_i(h)$ for $i\geq0$.
Taking $j=-1$ and $i=0,1,\ldots,n-2$ for \eqref{t3} we get $n-1$ equations which can be written as
\begin{equation}\label{t6}
2\mbox{\boldmath $T$}_1(\widetilde{\mbox{\boldmath $X$}}'(h)-\mbox{\boldmath $J$}(h)) - 2\mbox{\boldmath $T$}_2\widetilde{\mbox{\boldmath $Y$}}(h) = \widetilde{\mbox{\boldmath $X$}}(h),
\end{equation}
where
\begin{equation*}
\begin{split}
\widetilde{\mbox{\boldmath $Y$}}(h)=
\, &  \mbox{\rm col}\Big(\frac{{\mathrm d}}{{\mathrm d}h}I^+_{n-1,1}(h)-J_{n-1}(h),
\ldots,\frac{{\mathrm d}}{{\mathrm d}h}I^+_{2n-2,1}(h)-J_{2n-2}(h)\Big).
\end{split}
\end{equation*}
Similarly, taking $j=-1$ and $i=-1,0,\ldots,n-2$ for \eqref{t4} we get
\begin{equation}\label{t7}
\mbox{\boldmath $T$}_3(\widetilde{\mbox{\boldmath $X$}}'(h) - \mbox{\boldmath $J$}(h)) + \mbox{\boldmath $T$}_4\widetilde{\mbox{\boldmath $Y$}}(h) + \mbox{\boldmath $K$}(h)=0.
\end{equation}
Then eliminating $\widetilde{\mbox{\boldmath $Y$}}(h)$ from \eqref{t6} and \eqref{t7} we get \eqref{t5}.
\end{proof}

\section{Asymptotic expansion of Abelian integrals}\label{sec3}
In this section, we suppose that $\Gamma_{\beta}$ is a homoclinic loop of the unperturbed system \eqref{e1}$|_{\varepsilon=0}$.
We shall discuss the method of applying Picard-Fuchs equations 
to compute the asymptotic expansion of Melnikov function 
$\widetilde{M}(h)$ near $\Gamma_{\beta}$ for the piecewise smooth system \eqref{e1} 
and how to determine the maximum number of isolated zeros of $\widetilde M(h)$ for $0<\beta-h\ll1$. 
As an illustration, we study the asymptotic expansion of Melnikov function near a homoclinic hoop 
for system \eqref{sys}.

\subsection{Methodology}
In order to compute the asymptotic expansion of $\widetilde M(h)$ 
near $\Gamma_{\beta}$ for system \eqref{e1},
by \eqref{h4b} we only need to study the asymptotic expansions of integrals $I_{i,1}(h)$, $I_{i,1}^{+}(h)$
and $\widetilde{K}_{i}(h)$ for  $0\le\beta-h\ll1$, $i=0,1,\ldots$, $n-2$.

By \eqref{h4a} we can see that the asymptotic expansion of $\widetilde K_i(h)$ 
requires the corresponding asymptotic expansions of $x_j(h)$ and $y_j(h)$ for  $0\le\beta-h\ll1$, 
where points $(x_j(h),y_j(h))$, $j=1,2$, satisfy $H(x_j(h),y_j(h))=h$
and $L(x_j(h),y_j(h),\theta)=0$.
Using the implicit function theorem,  we can see that $x_j(h)$ and $y_j(h)$
are analytic functions in $|h-\beta|^{\frac{1}{p}}$,
where $p$ is a positive integer depending on $H$ and the value of $\theta$.
Therefore, the variation of the value of $\theta$ for the separation line
may lead to changes in the form of the asymptotic expansion of $\widetilde M(h)$ near $\Gamma_\beta$
through integrals $\widetilde K_i(h)$.
See the next section for example.

For intergrals $I_{i,1}(h)$, we can use Picard-Fuchs equations \eqref{g9} to compute their asymptotic expansions near $h=\beta$. 
Note that \eqref{g9} can be rewritten as
\begin{equation}\label{g12}
P(h)\mbox{\boldmath $X$}'(h)=\mbox{\boldmath $T$}(h)\mbox{\boldmath $X$}(h),\quad h \in(\alpha, \beta),
\end{equation}
where $P(h)={\rm det}\,(\mbox{\boldmath $T$}_1+\mbox{\boldmath $T$}_2\mbox{\boldmath $T$}_4^{-1}\mbox{\boldmath $T$}_3)$ 
is a polynomial in $h$ of degree at most $n-1$
 satisfying $P(\beta)=0$, 
and $\mbox{\boldmath $T$}(h)$ is a matrix whose entries are polynomials in $h$ of degree at most $n-2$. 
Note system \eqref{g12} can be well-defined at $h=\beta$. 
Then we can use \eqref{g12} to compute the asymptotic expansion of $\mbox{\boldmath $X$}(h)$ near $h=\beta$ with 
undetermined coefficients. 
For the form of the asymptotic expansion of  $\mbox{\boldmath $X$}(h)$, see  \cite{H2013} 
for a homoclinic loop passing through a hyperbolic saddle, a cusp or a nilpotent saddle.

For the asymptotic expansions of integrals $I_{i,1}^{+}(h)$, 
we can rewrite \eqref{t5} into the following form
\begin{equation}\label{xx2}
P(h)\widetilde{\mbox{\boldmath $X$}}'(h)=\mbox{\boldmath $T$}(h)\widetilde{\mbox{\boldmath $X$}}(h) + P(h)\mbox{\boldmath $J$}(h) - \mbox{\boldmath $S$}(h)\mbox{\boldmath $K$}(h),
\end{equation}
where $\mbox{\boldmath $S$}(h)$ is also a matrix polynomial in $h$ of degree at most $n-2$.
The asymptotic expansions of {\boldmath $J$}$(h)$ and {\boldmath $K$}$(h)$ for $0<\beta-h\ll 1$ can be obtained by using the corresponding asymptotic expansions of $x_j(h)$ and
$y_j(h)$, $j=1,2$. Then we can use \eqref{xx2} to compute the asymptotic expansion of $\widetilde{\mbox{\boldmath $X$}}(h)$ for $0\le \beta-h\ll 1$.

It follows that $2\widetilde{\mbox{\boldmath $X$}}(h)={\mbox{\boldmath $X$}}(h)$ for $\theta=\pi$
because of the symmetry of $\Gamma_h$ with respect to the $x$-axis.
Apparently,  Picard-Fuchs equations \eqref{xx2} become into \eqref{g12} in this case.

Without loss of generality, for $0<\theta<\pi$ 
we assume that $\Gamma_{\beta}^+$ does not
contain a singular point of the unperturbed system of  \eqref{e1}.
Then $\widetilde{\mbox{\boldmath $X$}}(h)$ is analytic at $h=\beta$ for $0<\theta<\pi$. Then we can expand $\widetilde{\mbox{\boldmath $X$}}(h)$ into the form
\begin{equation}\label{xx3}
\widetilde{\mbox{\boldmath $X$}}(h)=\tilde{\mbox{\boldmath $c$}}_0 + \tilde{\mbox{\boldmath $c$}}_1(h-\beta)+\cdots+\tilde{\mbox{\boldmath $c$}}_j(h-\beta)^j+\cdots,\quad 0\le \beta-h\ll1,
\end{equation}
where $\tilde{\mbox{\boldmath $c$}}_0=\widetilde{\mbox{\boldmath $X$}}(\beta)$ and $\tilde{\mbox{\boldmath $c$}}_1=\frac{\mathrm{d}}{\mathrm{d}h}\widetilde{\mbox{\boldmath $X$}}(\beta)$.
The remaining coefficients can be obtained by substituting \eqref{xx3} into \eqref{xx2} and comparing the coefficients of the expansions of both sides.

\begin{Rm}
The method above is applicable to the case of $\Gamma_{\beta}$ as a heteroclinic loop.
In this case $\widetilde{\mbox{\boldmath $X$}}(h)$ may not be analytic near $h=\beta$ even for $0<\theta<\pi$, because $\Gamma_{\beta}^+$ could probably 
contain singular points.
 In order to compute the asymptotic expansion of $\widetilde{\mbox{\boldmath $X$}}(h)$ near $\Gamma_{\beta}^+$,
  we need to pay more attentions to the possible forms of the expansion and the formulas of its first several coefficients
when we vary the value of $\theta$,
which we shall investigate in our next paper.
\end{Rm}

At the end of this subsection, we shall present a result on determining the maximum number of isolated zeros of $\widetilde M(h)$
for $0<\beta-h\ll 1$ by using the coefficients of its corresponding asymptotic expansion. Because the asymptotic expansion of $\widetilde M(h)$
near the homoclinic loop $\Gamma_\beta$ for piecewise smooth perturbations can have different forms 
(for example see Subsection \ref{SubSec43}), 
it is necessary for us to consider the asymptotic expansion of Melnikov functions in a general form.

Suppose that Melnikov function $M(h, \mbox{\boldmath $\delta$})$ can be expanded into the following form
\begin{equation}\label{h5a}
M(h,\mbox{\boldmath $\delta$})=c_0(\mbox{\boldmath $\delta$})
+\sum_{j=0}^{+\infty}\sum_{l=1}^{p+1} c_{pj+j+l}(\mbox{\boldmath $\delta$})g_l(h)h^j
\end{equation}
for $0<-h\ll 1$, where {\boldmath $\delta$} $\in \mathbb{R}^m$ represents a parameter vector, 
and
\begin{equation}\label{h5b}
g_l(h)\in\Big\{|h|^{\frac{q}{p}},\,|h|^{\frac{q+1}{p}},\,\ldots,\, |h|^{\frac{q+p-1}{p}},\,h\ln|h|\Big\},\quad 1\le q\le p
\end{equation}
for some $p\in \mathbb{N}^+$ and $g_{l}(h)=o(g_{l-1}(h))$.
To determine the maximum number of isolated zeros of $M(h,\mbox{\boldmath $\delta$})$ for $0<-h\ll1$ we have

\begin{thm}\label{th4}
Let \eqref{h5a} and \eqref{h5b} hold.
Let each coefficient $c_j(\mbox{\boldmath $\delta$})$  in \eqref{h5a} be linear in {\boldmath $\delta$}.
Suppose that there exist $k$ nonnegative integers $0\le i_1<i_2<\cdots<i_k$, such that for any coefficient $c_j(\mbox{\boldmath $\delta$})$
if $j\not\in \{i_1,\,i_2,\,\ldots,\,i_k\}\triangleq S_k$, we have
\begin{equation}\label{h3b}
c_j=O(|c_{i_1},\,c_{i_2},\,\ldots,\,c_{i_j^{\ast}}|),
\end{equation}
where $i_j^{\ast}=\max\{i_s\in S_k|i_s<j\}$, and particularly if $1\le j<i_1$, then $c_j(\mbox{\boldmath $\delta$})\equiv0$ for all $\mbox{\boldmath $\delta$}$.
If further 
\begin{equation}\label{h3c}
\mathrm{rank}\,\frac{\partial(c_{i_1},\,c_{i_2},\,\ldots,\,c_{i_k})}
{\partial(\delta_1,\,\delta_2,\,\ldots,\,\delta_m)}=k,
\end{equation}
then for any compact set $D\subset \mathbb{R}^m$, when $M(h,\mbox{\boldmath $\delta$})\not\equiv0$,
there exists $\varepsilon_0>0$ such that $M(h,\mbox{\boldmath $\delta$})$ has at most $k-1$ isolated zeros (counting multiplicity)
in $0<-h<\varepsilon_0$ for {\boldmath $\delta$} $\in D$. Furthermore, $M(h,\mbox{\boldmath $\delta$})$ can
have $k-1$ isolated simple zeros in an arbitrary neighborhood of $h=0$ for $h<0$ and some {\boldmath $\delta$}.
\end{thm}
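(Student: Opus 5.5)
We will follow the standard scheme for expansions with ordered asymptotic scale, as in Han's treatment of Melnikov functions near homoclinic loops. The ordered scale is the list
\[
1,\ g_1(h),\dots,g_{p+1}(h),\ g_1(h)h,\dots,g_{p+1}(h)h,\ \dots
\]
Call its $j$-th element $\phi_j(h)$, so that $M=\sum_{j\ge 0}c_j\phi_j$ with $\phi_0=1$ and $\phi_{j+1}=o(\phi_j)$ as $h\to0^-$. We will use that any finite combination $\sum_{j\le N}a_j\phi_j$ with real coefficients has a well-defined "generalized Wronskian" behaviour. The key property is a Chebyshev-type fact: for a nonzero combination $\sum_{s=1}^{k}a_s\phi_{i_s}$, the number of zeros near $0^-$ is at most $k-1$. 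This follows by the standard division--differentiation (Rolle) argument. Divide by the leading function $\phi_{i_1}$ and differentiate. Since the functions are powers $|h|^{r}$ and $|h|^r\ln|h|$ with increasing exponents, each step kills one term and leaves a combination of the same type with one fewer term, the leading one still dominant. After $k-1$ steps a single nonvanishing term remains, which has no zeros.

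\textbf{Step 1 (upper bound).} Use \eqref{h3c} to change parameters. Since the $c_{i_s}$ are linear and independent, complete them to linear coordinates $(c_{i_1},\dots,c_{i_k},\lambda)$ on $\mathbb{R}^m$. By \eqref{h3b} and linearity, every $c_j$ with $j\notin S_k$ is a linear combination of $c_{i_1},\dots,c_{i_j^*}$, i.e.\ $c_j=\sum_{i_s<j}\gamma_{js}c_{i_s}$ with constants $\gamma_{js}$; in particular $c_j\equiv0$ for $j<i_1$. Substituting and regrouping gives
\[
M(h,\boldsymbol\delta)=\sum_{s=1}^{k}c_{i_s}(\boldsymbol\delta)\,\Phi_s(h),\qquad \Phi_s=\phi_{i_s}+\sum_{j>i_s,\,j\notin S_k}\gamma_{js}\phi_j .
\]
Here $\Phi_s=\phi_{i_s}(1+o(1))$, with the tails converging uniformly. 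The rearrangement is justified on compact $D$ because the series converges uniformly near $h=0$ and the $c_j$ depend linearly on $\boldsymbol\delta$. Moreover $M\not\equiv0$ if and only if $(c_{i_1},\dots,c_{i_k})\neq0$.

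Next we normalize on a compact set. Set $c=(c_{i_1},\dots,c_{i_k})/|c|$, which ranges over the unit sphere. The zeros of $M$ coincide with those of $\sum_s \hat c_s\Phi_s$. We then run the division--differentiation argument on $\Phi_s$: each derivative of $\Phi_s/\Phi_{s_0}$ keeps an asymptotic leading term of the form $\mathrm{const}\cdot\phi_{i_s}/\phi_{i_{s_0}}$ times $h^{-1}$, with nonzero constant. This is where the assumption $g_l=o(g_{l-1})$ with distinct exponents (including the $h\ln|h|$ case) is used, to guarantee that the exponent differences are nonzero.

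The main obstacle is uniformity in the direction $\hat c$. A direction could have its first nonzero entry arbitrarily small, which would make $\varepsilon_0$ depend on $\hat c$. We will handle this by the usual compactness/induction on $k$. Suppose zeros accumulated at $0$ with more than $k-1$ of them for a sequence $\hat c^{(n)}\to\hat c^*$. Applying Rolle after dividing by $\Phi_1$ gives at least $k-1$ zeros of the derivative, which is a $(k-1)$-term system of the same structure in $\hat c_2,\dots,\hat c_k$. Induction applied to that system (with its own normalization) yields a contradiction. The base case $k=1$ is trivial, since $\Phi_1\neq0$ near $0$. Multiplicities are handled the same way, counting zeros of derivatives.

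\textbf{Step 2 (realization).} The realization uses the surjectivity given by \eqref{h3c}. Choose $\boldsymbol\delta$ so that $(c_{i_1},\dots,c_{i_k})$ is any prescribed vector. Take $c_{i_k}=1$ and choose the others successively with alternating signs and $|c_{i_1}|\ll|c_{i_2}|\ll\cdots\ll 1$. Each sign change between consecutive dominant terms produces a simple zero, located where $\Phi_s$ and $\Phi_{s+1}$ balance, and these can be placed in any prescribed neighborhood of $0^-$. This yields $k-1$ simple zeros, by the standard perturbation argument: start from $c_{i_k}\Phi_k$ and turn on $c_{i_{k-1}}$, then $c_{i_{k-2}}$, and so on, each time creating one new simple zero closer to $0$ while preserving the previous ones by simplicity.
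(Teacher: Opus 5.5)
Your proposal is correct and follows essentially the paper's route: reparametrize by $(c_{i_1},\dots,c_{i_k})$, regroup $M$ as $\sum_s c_{i_s}f_s(h)$, run the divide-by-the-leading-term-and-differentiate (Rolle) induction on $k$, and obtain the $k-1$ simple zeros from the asymptotically ordered $f_s$; your alternating-sign construction makes explicit what the paper attributes to "independence." Two small points: uniformity needs no compactness argument, because the functions $f_1,f_{2,1}',\dots,f_{k,k-1}'$ produced by the procedure do not depend on the coefficients, so one interval on which they are all nonzero serves every direction; and for the pair $h^{j+1}\ln|h|$, $h^{j+1}$ the exponent difference is zero, so the ratio $1/\ln|h|$ has derivative $-1/(h\ln^2|h|)$ rather than a nonzero constant times $h^{-1}/\ln|h|$ --- this is still nonvanishing and order-preserving, so your ``nonzero constant'' claim needs that amendment but the argument stands.
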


\begin{Rm}
 By \cite{H2013}, for analytic smooth near-Hamiltonian systems near a homoclinic or heteroclinic loop passing through hyperbolic saddles, cusps or nilpotent saddles,
Melnikov function can be expanded into the form in \eqref{h5a}. Then Theorem \ref{th4} can be also applied for the cases mentioned above.
\end{Rm}

Using the idea presented in the proof of Theorem 2.3.2 in \cite{H2013}, we can prove the above theorem.
For convenience of the reader, we present the proof below.

\begin{proof}
By \eqref{h3b} and \eqref{h3c} for any $j\not\in S_k$,  $c_j$ should be a linear combination of $c_{i_1}$,
$c_{i_2}$, $\ldots$, $c_{i^\ast_j}$, and we can take {\boldmath $\Lambda$}$=(c_{i_1},\,c_{i_2},\,\ldots,\,c_{i_k})$
as a free parameter vector. Then $M(h,\mbox{\boldmath $\delta$})$ can be written as
$M(h,\mbox{\boldmath $\delta$})\triangleq\widehat M(h,\mbox{\boldmath $\Lambda$})$, which is linear in {\boldmath $\Lambda$}.
Then $\widehat M(h,\mbox{\boldmath $\Lambda$})$ can be expanded as
\begin{equation}\label{h3d}
\widehat M(h,\mbox{\boldmath $\Lambda$})=c_{i_1}f_1(h) + c_{i_2}f_2(h) + \cdots + c_{i_k} f_k(h),
\end{equation}
where $f_s(h)\in C^{\infty}$, $1\le s\le k$. Comparing \eqref{h3d} with \eqref{h5a} we have $f_1(h)\sim 1$ for $i_1=0$ and
 $f_s(h)\sim h^{j_s}g_{l_s}(h)$ for $1\le l_s=i_s-Nj_s\le N$. Then there exists a small interval $(\beta,0)$ such that
 $f_s(h)\neq0$, $h\in(\beta,0)$ for any $1\le s\le k$.

We rewrite $\widehat M(h,\mbox{\boldmath $\Lambda$})$ as $\widehat M(h,\mbox{\boldmath $\Lambda$})=f_1(h)M_1(h)$, where
\begin{equation*} 
M_1(h)=c_{i_1} + c_{i_2}f_{2,1}(h)+\cdots+c_{i_k}f_{k,1}(h),
\end{equation*}
with $f_{s,1}(h)=f_s(h)/f_1(h)\in C^{\infty}$ satisfying $f_{s,1}(h)=o(f_{s-1,1}(h))$.
Then $\widehat M(h,\mbox{\boldmath $\Lambda$})$ has at most $k-1$ zeros (counting multiplicity) for $0<-h\ll 1$ if and only if
$M_1(h)$ has at most $k-1$ zeros (counting multiplicity) for $0<-h\ll 1$. It suffices to prove that $\frac{\mathrm{d}M_1(h)}{\mathrm{d}h}$
has at most $k-2$ zeros (counting multiplicity) for $0<-h\ll 1$.

Note that $\frac{\mathrm{d}M_1(h)}{\mathrm{d}h}$ can be written as
$\frac{\mathrm{d}M_1(h)}{\mathrm{d}h}=f'_{2,1}(h)M_2(h)$, where
\begin{equation*}
M_2(h)=c_{i_2}+ c_{i_3}f_{3,2}(h)+\cdots+c_{i_k}f_{k,2}(h),
\end{equation*}
where $f_{s,2}(h)=f'_{s,1}(h)/f'_{2,1}(h)\in C^\infty$ and $f_{s,2}(h)=o(f_{s-1,2}(h))$.
Because $f'_{2,1}(h)\neq0$ for $h$ negative and sufficiently close to $0$, $\frac{\mathrm{d}M_1(h)}{\mathrm{d}h}$
has at most $k-2$ zeros  (counting multiplicity)
in $0<-h\ll 1$ if and only if $M_2(h)$ has at most $k-2$ zeros (counting multiplicity) for $0<-h\ll1$.

Since the functions $M_1(h)$ and $M_2(h)$ have the same form,
using mathematical induction on $k$ we can show that $\widehat M(h,\mbox{\boldmath $\Lambda$})$ has most $k-1$ zeros (counting multiplicity)
in $0<-h\ll 1$ for {\boldmath $\delta$} $\in D$.
The independence of functions $f_1(h),\,f_2(h),\,\ldots, \,f_k(h)$ implies the existence of $k-1$ simple isolated zeros of $M(h,\mbox{\boldmath $\Lambda$})$
for $0<-h\ll 1$. The proof is completed.
\end{proof}


\subsection{Asymptotic expansion of $\widetilde M(h)$ for system \eqref{sys}}
In this subsection, we shall compute the asymptotic expansion of 
Melnikov function $\widetilde M(h)$ in \eqref{g3} near the homoclinic loop $\Gamma_0$ for  system \eqref{sys},
which will be needed for the proof of Theorem \ref{th3}.

For system \eqref{sys}, we assume 
$$P^\pm(x,y)=\sum_{i+j=0}^{4} a^\pm_{ij} x^i y^j,\quad Q^\pm(x,y)=\sum_{i+j=0}^{4} b^\pm_{ij} x^i y^j,$$
with the coefficients $a^\pm_{ij}$ and $b^\pm_{ij}$ as free parameters.
By \eqref{xx1}, \eqref{h4} and \eqref{q6}, $\widetilde M(h)$ in \eqref{g3} can be simplified as
\begin{equation}\label{m1a}
\widetilde M(h)=\sum_{i=0}^3\sum_{j=1}^{4-i}a_{i,j}I_{i,j}(h) + \sum_{i=0}^3\sum_{j=1}^{4-i}b_{i,j}I_{i,j}^+(h)
+\sum_{i+j=1}^5 e_{i,j}K_{i,j}(h),
\end{equation}
where
\begin{equation*}
\begin{split}
a_{i,j}=\frac{i+1}{j}a^-_{i+1,j-1} + b^-_{i,j},\quad b_{i,j}=&\,\frac{i+1}{j}(a^+_{i+1,j-1}-a^-_{i+1,j-1}) + b^+_{i,j}- b^-_{i,j},\\
e_{i,0}= \frac{1}{i}(b^+_{i-1,0}- b^-_{i-1,0})\,\,\mbox{and}&\,\, e_{i,j}=-\frac{1}{j}(a^+_{i,j}-a^-_{i,j})\,\, \mbox{for}\,\, j\ge 1.
\end{split}
\end{equation*}
Then we can take all the coefficients $a_{i,j}$, $b_{i,j}$ and $e_{i,j}$ in \eqref{m1a} as independent parameters.

We further simplify the expression of $\widetilde M(h)$  into the form \eqref{h4b} by the algebraic structures of integrals
$I_{i,j}(h)$, $I^+_{i,j}(h)$ and $K_{i,j}(h)$. 
By Lemmas \ref{lmm1}  and \ref{st1}, the algebraic structures of integrals $I_{i,j}(h)$ and $I^+_{i,j}(h)$
are given in the following two corollaries, respectively.

\begin{cor}\label{lmm3}
For Abelian integrals $I_{i,j}(h)$ of Hamiltonian $H(x,y)=\frac{1}{2}y^2-\frac{1}{4}x^4+\frac{1}{5}x^5$, the following identities hold:\\[1ex]
{\rm(i)} $I_{i,j}(h)\equiv0$ for $j$ even;\\[1ex]
{\rm(ii)} $(i+1)I_{i,j}(h)=j(I_{i+5,j-2}(h)-I_{i+4,j-2}(h))$ for $i\ge-1$ and $j\ge 1$;\\[1ex]
{\rm(iii)}  $(4i+14)I_{i,1}(h)=(5i+10)I_{i-1,1}(h)+20(i-4)h\,I_{i-5,1}(h)$ for $i\ge 5$,
and $I_{4,1}(h)=I_{3,1}(h)$.
\end{cor}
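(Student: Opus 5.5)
The plan is to specialize Lemma \ref{lmm1} to the Hamiltonian $H(x,y)=\frac{1}{2}y^2-\frac{1}{4}x^4+\frac{1}{5}x^5$. This is the case $n=5$ of \eqref{b1a} with
$$b_1=b_2=b_3=0,\qquad b_4=-\tfrac14,\qquad b_5=\tfrac15 .$$
Statement (i) is then exactly Lemma \ref{lmm1}(i), since $\Gamma_h$ is symmetric with respect to the $x$-axis.

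For (ii), I will substitute these coefficients into Lemma \ref{lmm1}(ii). Only the terms $k=4,5$ survive, and $4b_4=-1$, $5b_5=1$, so the lemma reads
$$(i+1)I_{i,j+2}=(j+2)\bigl(I_{i+5,j}-I_{i+4,j}\bigr),\qquad i\ge -1,\ j\ge -1.$$
Replacing $j$ by $j-2$ gives statement (ii). The admissibility condition $j-2\ge -1$ is precisely $j\ge 1$.

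For (iii), I will use Lemma \ref{lmm1}(iii) with $n=5$. Only the $k=4$ term remains in the sum, so for $i\ge -1$
$$\tfrac{2i+17}{5}\,I_{i+5,1}=(2i+2)h\,I_{i,1}+\tfrac{2i+14}{4}\,I_{i+4,1}.$$
Multiplying by $20$ gives
$$4(2i+17)I_{i+5,1}=40(i+1)h\,I_{i,1}+10(i+7)I_{i+4,1}.$$
Shifting $i\mapsto i-5$ (valid for $i\ge 4$) and dividing by $2$ yields
$$(4i+14)I_{i,1}=(5i+10)I_{i-1,1}+20(i-4)h\,I_{i-5,1},$$
which is the stated recursion for $i\ge5$.

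The only point requiring care is the boundary value $i=4$. There the $h$-term carries the factor $20(i-4)=0$, so the relation reduces to $30I_{4,1}=30I_{3,1}$, i.e. $I_{4,1}=I_{3,1}$. The vanishing term involves $I_{-1,1}$, which is well defined because $\Gamma_h$ surrounds $(1,0)$ inside $x>0$ for $h\in(-\frac1{20},0)$. In any case its coefficient is zero.

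There is no real obstacle here. The work is bookkeeping of the coefficients $(2i+3k+2)b_k$ and of the index ranges after the shifts.
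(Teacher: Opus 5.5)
Your proposal is correct and follows the paper's own route: the corollary is obtained by specializing Lemma \ref{lmm1} to $n=5$, $b_1=b_2=b_3=0$, $b_4=-\frac14$, $b_5=\frac15$. Your bookkeeping is also right, namely the shift $j\mapsto j-2$ in (ii), the shift $i\mapsto i-5$ with division by $2$ in (iii), and the boundary case $i=4$ giving $I_{4,1}=I_{3,1}$.
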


\begin{cor}\label{lm1}
For Abelian integrals $I^+_{i,j}(h)$ of system \eqref{sys}, the following identities hold:

{\rm(i)} $(i+1)I^+_{i,j}(h)=j(I^+_{i+5,j-2}(h)-I^+_{i+4,j-2}(h)) + K_{i+1,j}(h)$ for $i\ge-1$ and $j\ge 1$;\vspace{1ex}

{\rm(ii)}  $(4i+10j+4)I^+_{i,j}(h)=20(i-4)h\,I^+_{i-5,j}(h)+5(i+2j)I^+_{i-1,j}(h) - 10K_{i-4,j+2}(h)$ for $i\ge 5$ and $j=0,1$.
\end{cor}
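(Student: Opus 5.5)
Both identities are the specializations of Lemma \ref{st1} to $H(x,y)=\frac12y^2-\frac14x^4+\frac15x^5$, followed by an index shift. Here $n=5$, $b_1=b_2=b_3=0$, $b_4=-\frac14$, $b_5=\frac15$. I will substitute these values into the two statements of Lemma \ref{st1} and rescale. Nothing beyond Lemma \ref{st1} is needed.

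For (i), I start from Lemma \ref{st1}(i):
$$(i+1)I^+_{i,j+2}(h)=(j+2)\sum_{k=1}^{5}kb_kI^+_{i+k,j}(h)+K_{i+1,j+2}(h).$$
The only nonzero terms in the sum are $4b_4I^+_{i+4,j}=-I^+_{i+4,j}$ and $5b_5I^+_{i+5,j}=I^+_{i+5,j}$. So the sum equals $I^+_{i+5,j}-I^+_{i+4,j}$. Renaming $j+2$ as $j$ turns the range $j\ge-1$ into $j\ge1$, and the identity becomes exactly (i) for $i\ge-1$, $j\ge1$.

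For (ii), I start from Lemma \ref{st1}(ii), valid for $i\ge-1$ and $j=0,1$. Inserting the coefficients gives
$$\tfrac15(2i+5j+12)I^+_{i+5,j}=(2i+2)hI^+_{i,j}+\tfrac14(2i+4j+10)I^+_{i+4,j}-K_{i+1,j+2}.$$
I then multiply by $20$ and replace $i$ by $i-5$. The left side becomes $4(2i+5j+2)I^+_{i,j}$. The right side becomes
$$40(i-4)hI^+_{i-5,j}+10(i+2j)I^+_{i-1,j}-20K_{i-4,j+2}.$$
Dividing by $2$ yields the stated formula. The shifted range is $i\ge4$, which contains the claimed range $i\ge5$.

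The only place needing care is this bookkeeping of linear coefficients under the shift $i\mapsto i-5$ and the factor $20$. I expect no genuine obstacle: it is a direct check of the two coefficients $4(2i+5j+2)$ and $10(i+2j)$.
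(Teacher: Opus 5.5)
Your proposal is correct and follows the same route as the paper, which derives this corollary directly from Lemma \ref{st1}. You specialize to $n=5$, $b_4=-\frac14$, $b_5=\frac15$ (all other $b_k$ zero), shift $j+2\mapsto j$ in (i), and in (ii) multiply by $20$, shift $i\mapsto i-5$ and divide by $2$; the coefficients $4(2i+5j+2)$, $20(i-4)$, $5(i+2j)$ and $-10$ all check out.
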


It is straightforward to obtain the algebraic structure of $K_{i,j}(h)$ for system \eqref{sys}
by Lemma \ref{lm3} and identities in \eqref{q4} and \eqref{q3}.
Then by Theorem \ref{mth1} we can write $\widetilde M(h)$ as a linear combination of
$I_{i,1}(h)$, $I^+_{i,1}(h)$ and $\widetilde K_i(h)$, $0\le i\le 3$,
with explicit expressions of coefficients in parameters $a_{i,j}$, $b_{i,j}$ and $e_{i,j}$.

Next, for system \eqref{sys} we shall compute the asymptotic expansions of generating functions 
$I_{i,1}(h)$, $I^+_{i,1}(h)$ and $\widetilde K_i(h)$, $0\le i\le 3$.

By \cite{HYX2012},  the vector function $\mbox{\boldmath $X$}(h)=\mbox{\rm col}(I_{0,1}(h),\,I_{1,1}(h),\,I_{2,1}(h),\, I_{3,1}(h))$
 can be expanded in the following form
\begin{equation}\label{b9}
\mbox{\boldmath $X$}(h)= \sum_{j=0}^{+\infty}
\mbox{\boldmath $a$}_j h^j
+ \sum_{j=1}^{+\infty}\mbox{\boldmath $b$}_j \ln\!|h|\,h^j
+ \sum_{j=0}^{+\infty}(\mbox{\boldmath $c$}_j {\lvert h \rvert}^\frac{3}{4}
+\mbox{\boldmath $d$}_j {\lvert h \rvert}^\frac{5}{4}) h^j
\end{equation}
for $0 < - h \ll1$, where $\mbox{\boldmath $a$}_j$, $\mbox{\boldmath $b$}_j$, $\mbox{\boldmath $c$}_j$ and $\mbox{\boldmath $d$}_j$ are vector coefficients. 
Then by Theorem \ref{lmm2}, $\mbox{\boldmath $X$}(h)$ satisfies Picard-Fuchs equations
\begin{equation}\label{b7}
P(h)\mbox{\boldmath $X$}'(h)=(\mbox{\boldmath $A$}_1h+\mbox{\boldmath $A$}_0)\mbox{\boldmath $X$}(h),
\end{equation}
where $P(h)=20h(20h+1)$, 
\begin{equation}\label{b8}
\mbox{\boldmath $A$}_0 =
\left( \begin{array}{cccc}
15 & 2 & 3 & -26 \\
0 & 20 & 3 & -26 \\
0 & 0 & 25 & -26 \\
0 & 0 & 0 & 0
\end{array} \right), \quad
\mbox{\boldmath $A$}_1 =
\left( \begin{array}{cccc}
280 & 0 & 0 & 0 \\
-20 & 360 & 0 & 0 \\
-20 & -40 & 440 & 0 \\
-20 & -40 & -60 & 520
\end{array} \right).
\end{equation}
Then substituting \eqref{b9} into \eqref{b7} we get recursive formula for coefficients in \eqref{b9},
which are given in the next lemma.

\begin{lm}\label{lmm5}
Let $\mbox{\boldmath $A$}_0$ and $\mbox{\boldmath $A$}_1$ be matrices given in \eqref{b8}, 
and $\mbox{\boldmath $b$}_0=\mbox{\boldmath $0$}$.
For the  vector coefficients in \eqref{b9} we have
\begin{equation}\label{b9a}
\begin{split}
&\mbox{\boldmath $A$}_0\mbox{\boldmath $a$}_0=0,\,\qquad
\mbox{\boldmath $A$}_0 \mbox{\boldmath $b$}_1= 20\mbox{\boldmath $b$}_1,\,\qquad
\mbox{\boldmath $A$}_0 \mbox{\boldmath $c$}_0= 15\mbox{\boldmath $c$}_0,\,\qquad
\mbox{\boldmath $A$}_0 \mbox{\boldmath $d$}_0= 25\mbox{\boldmath $d$}_0, \\[1ex]
&\mbox{\boldmath $A$}_{j,1}\mbox{\boldmath $b$}_j =
(400(j-1)\,\mbox{\boldmath $E$}_4-\mbox{\boldmath $A$}_1)\,\mbox{\boldmath $b$}_{j-1},\,\,j\ge 2,\\[1ex]
&\mbox{\boldmath $A$}_{j,1}\mbox{\boldmath $a$}_j=
(400(j-1)\,\mbox{\boldmath $E$}_4-\mbox{\boldmath $A$}_1)\,\mbox{\boldmath $a$}_{j-1}
+400 \mbox{\boldmath $b$}_{j-1}+ 20 \mbox{\boldmath $b$}_{j},\,\,j\ge 1, \\[1ex]
&\mbox{\boldmath $A$}_{j,2}\mbox{\boldmath $c$}_j=
(100(4j-1)\,\mbox{\boldmath $E$}_4-\mbox{\boldmath $A$}_1)\,\mbox{\boldmath $c$}_{j-1},\,\,j\ge 1,\\[1ex]
&\mbox{\boldmath $A$}_{j,3}\mbox{\boldmath $d$}_j=
(100(4j+1)\,\mbox{\boldmath $E$}_4-\mbox{\boldmath $A$}_1)\,\mbox{\boldmath $d$}_{j-1},\,\,j\ge 1,
\end{split}
\end{equation}
where  $\mbox{\boldmath $A$}_{j,k}=\mbox{\boldmath $A$}_0-(20j+\lambda_k)\mbox{\boldmath $E$}_4$ with $\lambda_1=0$, $\lambda_2=15$,
$\lambda_3=25$, and {\boldmath $E$}$_4$ is the $4\times 4$ identity matrix.
\end{lm}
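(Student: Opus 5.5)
We substitute the expansion \eqref{b9} into the Picard-Fuchs equations \eqref{b7} and compare coefficients. By \cite{HYX2012} the four series in \eqref{b9} converge for $0<-h\ll1$ and may be differentiated term by term. It is convenient to write $P(h)\mbox{\boldmath $X$}'(h)=(20+400h)\,h\mbox{\boldmath $X$}'(h)$ and to first compute $h\mbox{\boldmath $X$}'(h)$ termwise. For $h<0$ we have $\frac{\mathrm d}{\mathrm dh}|h|^{\alpha}=-\alpha|h|^{\alpha-1}=\alpha|h|^{\alpha}/h$, which gives
\begin{equation*}
h\frac{\mathrm d}{\mathrm dh}\big(|h|^{\alpha}h^j\big)=(\alpha+j)|h|^{\alpha}h^j,\qquad h\frac{\mathrm d}{\mathrm dh}\big(\ln|h|\,h^j\big)=h^j+j\ln|h|\,h^j .
\end{equation*}
Thus $h\mbox{\boldmath $X$}'$ is again a series in the four families $h^j$, $\ln|h|\,h^j$, $|h|^{3/4}h^j$ and $|h|^{5/4}h^j$. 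The only mixing between families is that each logarithmic term $\mbox{\boldmath $b$}_j\ln|h|\,h^j$ contributes an extra $\mbox{\boldmath $b$}_j h^j$ to the polynomial family.

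Next we multiply by $20+400h$, which shifts the index by one in the $400h$ part. We then expand the right-hand side $(\mbox{\boldmath $A$}_1h+\mbox{\boldmath $A$}_0)\mbox{\boldmath $X$}$ in the same way, and equate the coefficients of each basis function. This is legitimate because the functions $h^j$, $\ln|h|\,h^j$, $|h|^{3/4}h^j$, $|h|^{5/4}h^j$ ($j\ge0$) are linearly independent as asymptotic scales near $h=0^-$. Their exponents lie in distinct classes mod $\mathbb Z$, or are separated by the logarithm, so an identically vanishing convergent combination must have all coefficients zero.

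The coefficient of $|h|^{3/4}h^j$ gives
\begin{equation*}
20\big(j+\tfrac34\big)\mbox{\boldmath $c$}_j+400\big(j-\tfrac14\big)\mbox{\boldmath $c$}_{j-1}=\mbox{\boldmath $A$}_0\mbox{\boldmath $c$}_j+\mbox{\boldmath $A$}_1\mbox{\boldmath $c$}_{j-1}.
\end{equation*}
This is exactly $\mbox{\boldmath $A$}_{j,2}\mbox{\boldmath $c$}_j=(100(4j-1)\mbox{\boldmath $E$}_4-\mbox{\boldmath $A$}_1)\mbox{\boldmath $c$}_{j-1}$. The case $j=0$, with $\mbox{\boldmath $c$}_{-1}=0$, gives $\mbox{\boldmath $A$}_0\mbox{\boldmath $c$}_0=15\mbox{\boldmath $c$}_0$.

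The $|h|^{5/4}$ family works identically with $\alpha=5/4$. Here $20(j+\tfrac54)=20j+25$ and $400(j+\tfrac14)=100(4j+1)$, which gives the $\mbox{\boldmath $d$}$ recursion and $\mbox{\boldmath $A$}_0\mbox{\boldmath $d$}_0=25\mbox{\boldmath $d$}_0$.

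The coefficient of $\ln|h|\,h^j$ gives $20j\mbox{\boldmath $b$}_j+400(j-1)\mbox{\boldmath $b$}_{j-1}=\mbox{\boldmath $A$}_0\mbox{\boldmath $b$}_j+\mbox{\boldmath $A$}_1\mbox{\boldmath $b$}_{j-1}$. Since $\mbox{\boldmath $b$}_0=\mbox{\boldmath $0$}$, the case $j=1$ is $\mbox{\boldmath $A$}_0\mbox{\boldmath $b$}_1=20\mbox{\boldmath $b$}_1$, and $j\ge2$ is the stated recursion.

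Finally, the coefficient of $h^j$ collects $20j\mbox{\boldmath $a$}_j+400(j-1)\mbox{\boldmath $a$}_{j-1}$ together with the log contributions $20\mbox{\boldmath $b$}_j+400\mbox{\boldmath $b$}_{j-1}$. Equating this with $\mbox{\boldmath $A$}_0\mbox{\boldmath $a$}_j+\mbox{\boldmath $A$}_1\mbox{\boldmath $a$}_{j-1}$ and rearranging yields the $\mbox{\boldmath $a$}_j$ recursion for $j\ge1$. For $j=0$ it yields $\mbox{\boldmath $A$}_0\mbox{\boldmath $a$}_0=0$, using $\mbox{\boldmath $b$}_0=\mbox{\boldmath $0$}$.

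The computations are routine bookkeeping. The step needing the most care is the justification: termwise differentiation of \eqref{b9}, and the uniqueness of coefficients in this mixed basis. We would rely on \cite{HYX2012} for convergence of the expansion. For independence we would argue by contradiction: isolate the lowest-order nonzero term among the four families and let $h\to0^-$, which forces that coefficient to vanish.
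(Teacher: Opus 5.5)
Your proposal is correct and is the paper's own route: substitute \eqref{b9} into \eqref{b7} with $P(h)=h(20+400h)$ and match the coefficients of $h^j$, $\ln|h|\,h^j$, $|h|^{3/4}h^j$ and $|h|^{5/4}h^j$. Your computations, including the extra $20\mbox{\boldmath $b$}_j+400\mbox{\boldmath $b$}_{j-1}$ contributed by the logarithmic family to the polynomial family, reproduce \eqref{b9a} exactly, and your justification of termwise differentiation and of the linear independence of the four families fills in what the paper leaves implicit.
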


Note that det($\mbox{\boldmath $A$}_{1,1}$)=0 and all other matrices $\mbox{\boldmath $A$}_{j,k}$ in \eqref{b9a} are invertible.
Then if we have the values of $\mbox{\boldmath $a$}_0$, $\mbox{\boldmath $a$}_1$, $\mbox{\boldmath $b$}_1$, $\mbox{\boldmath $c$}_0$ and $\mbox{\boldmath $d$}_0$,
we can get all the remaining coefficients in \eqref{b9} by using \eqref{b9a}.

Suppose that $\mbox{\boldmath $a$}_0$, $\mbox{\boldmath $a$}_1$, $\mbox{\boldmath $b$}_1$, $\mbox{\boldmath $c$}_0$ and $\mbox{\boldmath $d$}_0$ are given by
\begin{equation*}
\begin{split}
&\mbox{\boldmath $a$}_k=(a_{ik})_{4\times1},\,\,\,
\mbox{\boldmath $b$}_1=(b_{i1})_{4\times1},\,\,\,
\mbox{\boldmath $c$}_0=(c_{i0})_{4\times1},\,\,\,
\mbox{\boldmath $d$}_0=(d_{i0})_{4\times1}.
\end{split}
\end{equation*}
Solving the first four equations of \eqref{b9a} yields
\begin{equation}\label{b13c}
\begin{split}
&a_{20}=\frac{5}{6}a_{10},\quad
a_{30}=\frac{25}{33}a_{10},\quad a_{40}=\frac{625}{858}a_{10},\\[0.5ex]
&b_{21}=\frac{5}{2}b_{11},\quad b_{31}=b_{41}=0,\\
&c_{20}=c_{30}=c_{40}=0,\\
&d_{20}=\frac{10}{7}d_{10},\quad d_{30}=\frac{50}{21}d_{10},\quad d_{40}=0.
\end{split}
\end{equation}
Using the formulas presented in \cite{HYX2012}, we obtain
\begin{equation}\label{b13d}
\begin{split}
a_{10}=&\oint_{\Gamma_0}y\,\mathrm{d}x=2\int^{\frac{5}{4}}_0x^2\sqrt{\frac{1}{2}
-\frac{2}{5}x}\,\mathrm{d}x=\, \frac{25}{84}\sqrt{2},\\
b_{11}=& -\frac{1}{8}\widetilde r_{10}=-\frac{\sqrt{2}}{5},\\
c_{10}=&\,\widetilde{r}_{00}\widetilde A_{0}=
2\sqrt{2}\times 4^{\frac{1}{4}}\widetilde A_{0}=4\widetilde A_0,\\
d_{10}=&\,\widetilde{r}_{20}\widetilde A_2
=\frac{84}{25}\widetilde A_2,
\end{split}
\end{equation}
where
\begin{equation*}\label{b13e}
\begin{split}
&\widetilde A_0= -\frac{2}{3}\int_0^1\frac{\mathrm{d}v}{\sqrt{1-v^4}}<0,\\
&\widetilde A_2= \frac{2}{5}\!\left[1-\int_0^1\frac{v^2\mathrm{d}v}{\sqrt{1-v^4}(1+\sqrt{1-v^4})}\right]\!>0.
\end{split}
\end{equation*}
With \eqref{b13c} and \eqref{b13d} holding, from
$\mbox{\boldmath $A$}_0 \mbox{\boldmath $a$}_1
+ \mbox{\boldmath $A$}_1 \mbox{\boldmath $a$}_{0}
= 20(\mbox{\boldmath $a$}_{1} + \mbox{\boldmath $b$}_{1})$
we get
\begin{equation}\label{b13f}
a_{21}=\frac{5}{2}a_{11}+3\sqrt{2},\quad
a_{31}=5\sqrt{2},\quad a_{41}=\frac{25}{6}\sqrt{2}.
\end{equation}
Then by \eqref{b9a}, \eqref{b13c}, \eqref{b13d}
and \eqref{b13f} we can derive the following lemma.

\begin{lm}\label{lmm6}
For system \eqref{sys1}, Abelian integrals $I_{0,1}(h)$, $I_{1,1}(h)$, $I_{2,1}(h)$ and
$I_{3,1}(h)$ have the following asymptotic expansions for $0<-h\ll 1$:
\begin{equation}
\begin{split}\label{b13a}
I_{0,1}(h)=\,& \frac{25}{84}\sqrt{2} + k_1 |h|^{\frac{3}{4}}-\frac{1}{5}\sqrt{2} h \ln\!|h| + k_2 h + k_3 |h|^{\frac{5}{4}}
- \frac{663}{1750} k_1 h |h| ^{\frac{3}{4}}\\
& + \frac{1386}{3125}\sqrt{2} h^2 \ln\!|h| +\cdots,\\
I_{1,1}(h)=\,& \frac{125}{504} \sqrt{2}-\frac{1}{2} \sqrt{2} h \ln\!|h| +(3 \sqrt{2}+\frac{5}{2} k_2) h
+\frac{10}{7} k_3 |h|^{\frac{5}{4}} - \frac{78}{175} k_1 h |h|^{\frac{3}{4}}\\
& + \frac{63}{125} \sqrt{2} h^2 \ln\!|h| +\cdots,\\
I_{2,1}(h)=\,& \frac{625}{2772} \sqrt{2}+5 \sqrt{2} h+\frac{50}{21} k_3 |h|^{\frac{5}{4}} - \frac{18}{35} k_1 h |h|^{\frac{3}{4}}
+\frac{14}{25} \sqrt{2} h^2 \ln\!|h| +\cdots,\\
I_{3,1}(h)=\,& \frac{15625}{72072} \sqrt{2}+\frac{25}{6} \sqrt{2} h - \frac{4}{7} k_1 h |h|^{\frac{3}{4}}
+ \frac{3}{5} \sqrt{2} h^2 \ln\!|h| +\cdots ,
\end{split}
\end{equation}
where $k_1$, $k_2$ and $k_3$ are constants with $k_1<0$ and $k_3>0$.
\end{lm}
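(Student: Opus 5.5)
The plan is to build the expansions \eqref{b13a} from the ansatz \eqref{b9} and the recursions of Lemma \ref{lmm5}. The lowest-order coefficients are already fixed by \eqref{b13c}, \eqref{b13d} and \eqref{b13f}; the rest follow by solving a finite number of the linear systems \eqref{b9a}.

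\emph{Step 1: name the free constants.} Set $k_1=c_{10}=4\widetilde A_0$ and $k_3=d_{10}=\frac{84}{25}\widetilde A_2$. Their signs $k_1<0$ and $k_3>0$ follow at once from the sign statements for $\widetilde A_0$ and $\widetilde A_2$. Set $k_2=a_{11}$. This component stays undetermined because $\det(\mbox{\boldmath $A$}_{1,1})=0$, so the equation for $\mbox{\boldmath $a$}_1$ does not determine it.

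\emph{Step 2: read off the leading terms.} The constant terms come from $\mbox{\boldmath $a$}_0$ using $a_{10}=\frac{25}{84}\sqrt2$ and the ratios in \eqref{b13c}. For instance, $\frac{5}{6}\cdot\frac{25}{84}\sqrt2=\frac{125}{504}\sqrt2$, and similarly for $a_{30}$ and $a_{40}$.

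\begin{itemize}
\item The $h\ln|h|$ coefficients are $\mbox{\boldmath $b$}_1=(-\frac{\sqrt2}{5},-\frac{\sqrt2}{2},0,0)^T$.
\item The $|h|^{3/4}$ coefficients are $\mbox{\boldmath $c$}_0=(k_1,0,0,0)^T$.
\item The $|h|^{5/4}$ coefficients are $\mbox{\boldmath $d$}_0=k_3(1,\frac{10}{7},\frac{50}{21},0)^T$.
\item The coefficient of $h$ is $\mbox{\boldmath $a$}_1=(k_2,\frac52k_2+3\sqrt2,5\sqrt2,\frac{25}{6}\sqrt2)^T$, taken from \eqref{b13f}.
\end{itemize}

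Each of these matches a term of \eqref{b13a}.

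\emph{Step 3: the higher terms.} Since $\mbox{\boldmath $A$}_{1,2}$ is invertible, solve
$$\mbox{\boldmath $c$}_1=\mbox{\boldmath $A$}_{1,2}^{-1}(300\mbox{\boldmath $E$}_4-\mbox{\boldmath $A$}_1)\mbox{\boldmath $c$}_0.$$
Because $\mbox{\boldmath $c$}_0$ is $k_1$ times the first unit vector, only the first column of $300\mbox{\boldmath $E$}_4-\mbox{\boldmath $A$}_1$ is needed, namely $(20,20,20,20)^T$. The matrix $\mbox{\boldmath $A$}_{1,2}=\mbox{\boldmath $A$}_0-35\mbox{\boldmath $E$}_4$ is upper triangular, so back substitution starts from the last row: $-35c_{41}=20k_1$ gives $c_{41}=-\frac47k_1$. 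The third row gives $-10c_{31}-26c_{41}=20k_1$, hence $c_{31}=-\frac{18}{35}k_1$, and the remaining two rows give the other entries similarly. Note that $|h|^{3/4}h=-h|h|^{3/4}$ once the sign conventions are absorbed.

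In the same way, $\mbox{\boldmath $b$}_2$ is obtained from $\mbox{\boldmath $A$}_{2,1}\mbox{\boldmath $b$}_2=(400\mbox{\boldmath $E$}_4-\mbox{\boldmath $A$}_1)\mbox{\boldmath $b$}_1$, with $\mbox{\boldmath $A$}_{2,1}=\mbox{\boldmath $A$}_0-40\mbox{\boldmath $E$}_4$ also upper triangular. This gives the $h^2\ln|h|$ coefficients, for example $\frac{3}{5}\sqrt2$ for $I_{3,1}$.

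\emph{Main obstacle.} The real difficulty is not this arithmetic. It is the justification behind \eqref{b13d}, namely the formulas from \cite{HYX2012} that give $b_{11}$, $c_{10}$ and $d_{10}$ in terms of the local normal form at the nilpotent saddle. I would take these as given, as the paper does. Beyond that, the remaining care is bookkeeping of signs: the expansion is in $|h|$ while the recursion is written in $h$, and consistency of \eqref{b9} with \eqref{b7} must hold at each order. I would check this by direct substitution of the truncated series into \eqref{b7}.
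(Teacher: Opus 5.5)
Your proposal is correct and follows the paper's route exactly. The paper obtains the lemma simply by running the recursions \eqref{b9a} from the initial data \eqref{b13c}, \eqref{b13d}, \eqref{b13f}, and your back-substitutions, $\mbox{\boldmath $c$}_1=-k_1(\frac{663}{1750},\frac{78}{175},\frac{18}{35},\frac47)^T$ and $\mbox{\boldmath $b$}_2=\sqrt2(\frac{1386}{3125},\frac{63}{125},\frac{14}{25},\frac35)^T$, reproduce the stated coefficients.

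One correction: drop the remark that $|h|^{3/4}h=-h|h|^{3/4}$. For $h<0$ one has $\frac{\mathrm{d}}{\mathrm{d}h}(|h|^{3/4}h^j)=(j+\frac34)|h|^{3/4}h^{j-1}$, so $\mbox{\boldmath $c$}_1$ is already the coefficient vector of $h|h|^{3/4}$ with no sign change, exactly as your values show.
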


For $\widetilde K_{i}(h)$, $i=0,1,2,3$,
we will use the implicit function theorem to compute the asymptotic expansions of $(x_j(h),y_j(h))$, $j=1,2$,
from
\begin{equation}\label{a5z}
\frac{1}{2}y^2-\frac{1}{4}x^4 + \frac{1}{5}x^5=h,\quad 
\sin(\theta)(x-1)-\cos(\theta) y=0.
\end{equation}

For $\theta=\pi$, we have $y_1(h)=y_2(h)\equiv0$. 
By the implicit function theorem, from $|h|^{\frac{1}{4}}=x(\frac{1}{4}-\frac{1}{5}x)^{\frac{1}{4}}$ it follows that $x_1(h)=\xi(|h|^{\frac{1}{4}})$ is analytic in $|h|^{\frac{1}{4}}$ with the following expansion
\begin{equation}\label{a6}
\begin{split}
x_1(h) =\,& \sqrt{2}|h|^{\frac{1}{4}} + \cdots + \frac{1067760993\sqrt{2}}{78125000}|h|^{\frac{11}{4}} + \cdots,\quad 0<-h\ll 1.
\end{split}
\end{equation}
Similarly, from $h=x^4(\frac{1}{4}-\frac{1}{5}x)$ it follows that $x_2(h)$ is analytic in $h$ with the following expansion
\begin{equation}\label{a7}
\begin{split}
x_2(h) =\,& \frac{5}{4} + \frac{256}{125}h + \cdots + \frac{6979321856}{48828125}h^{3} + \cdots, \quad 0<-h\ll 1.
\end{split}
\end{equation}

For $\theta=\frac{\pi}{2}$ from \eqref{a5z} we have
\begin{equation}\label{h12}
x_1(h)=x_2(h)=1,\quad y_1(h)=-y_2(h)=\sqrt{\frac{1+20h}{10}}.
\end{equation}

For $0<\theta\in(0,\frac{\pi}{2})\cup (\frac{\pi}{2},\pi)$, we assume that $\Gamma^+_0$ is from $(x_a,y_a)$ to $(x_b,y_b)$. 
Using $\tan(\theta)=\frac{y_a}{x_a-1}$ for \eqref{a5z}, 
it follows that $x_1(h)$ is analytic in $h$ with the following expansion

\begin{equation}\label{h2}
\begin{split}
x_1(h)=&\, x_a + \frac{10(x_a - 1)h}{x_a^{3}(6x_a^{2} - 15 x_a + 10)} - \frac{50(x_a - 1)N_1(x_a)h^{2}}{x_a^{7}(6x_a^{2} - 15x_a + 10)^3}\\
&+  \frac{500(x_a - 1)N_2(x_a)h^{3}}{x_a^{11}(6x_a^{2} - 15 x_a + 10)^{5}} + \cdots \triangleq \eta(h,x_a),\quad 0<-h \ll 1,
\end{split}
\end{equation}
where
\begin{equation*}
\begin{split}
N_1(x) &=\, 36x^{3} - 105x^{2} + 100 x - 30,\\
N_2(x) &=\, 1056x^6 - 6120x^{5} + 14645x^{4} - 18460x^{3} + 12880x^{2} - 4700 x + 700.
\end{split}
\end{equation*}
Similarly, we have
\begin{equation}\label{h3}
x_2(h)= \eta(h,x_b)\,\, \mbox{for}\,\, 0<-h \ll 1
\end{equation} by using $\tan(\theta)=\frac{y_b}{x_b-1}$ for \eqref{a5z}.

The separation line can be determined by $x_a$ with $0<x_a<\frac{5}{4}$ and $y_a>0$ for $k\triangleq\tan(\theta)\neq0$.
Here we only study the case of $1<x_a<\frac{5}{4}$,
because for $0<x_a<1$ (i.e.  $k<0$) we can get the previous case (i.e. $k>0$) by the transformation $(y,t)\rightarrow (-y,-t)$.
Note that $x_a$ and $x_b$ should satisfy
\begin{equation}\label{ha3}
(x_a-1)^2(5x_b^4-4x_b^5)-(x_b-1)^2(5x_a^4-4x_a^5)=0,
\end{equation}
which is obtained from
$$\frac{y_a}{x_a-1}=\frac{y_b}{x_b-1},\quad H(x_a,y_a)=H(x_b,y_b)=0.$$
Then for any $1<x_a<\frac{5}{4}$, the value of $x_b$ can determined for $0<x_b<1$ from \eqref{ha3},
even though its explicit expression is very complicated.

To get the asymptotic expansions of generating integrals $I^+_{0,1}(h)$, $I^+_{1,1}(h)$, $I^+_{2,1}(h)$, $I^+_{3,1}(h)$, we shall need the Picard-Fuchs equation,
which is given in the next lemma by Theorem \ref{st2}.

\begin{cor}\label{lm2}
Let $\widetilde{\mbox{\boldmath $X$}}(h)
=\mbox{\rm col}(I^+_{0,1}(h),\,I^+_{1,1}(h),\,I^+_{2,1}(h),\,I^+_{3,1}(h))$ for system \eqref{sys}. Then
$\widetilde{\mbox{\boldmath $X$}}(h)$ satisfies the Picard-Fuchs equation
\begin{equation}\label{h5}
P(h)\frac{\mathrm{d}}{\mathrm{d}h}\widetilde{\mbox{\boldmath $X$}}(h)=(\mbox{\boldmath $A$}_0 + \mbox{\boldmath $A$}_1h)\widetilde{\mbox{\boldmath $X$}}(h) + (\mbox{\boldmath $A$}_2 + \mbox{\boldmath $A$}_3h)\mbox{\boldmath $K$}(h) + P(h)\mbox{\boldmath $J$}(h),
\end{equation}
where $P(h)=20h(20h+1)$,
$$\mbox{\boldmath $K$}(h) = \mbox{\rm col}(K_{0,1}(h),\,\ldots,\,K_{4,1}(h)),\quad \mbox{\boldmath $J$}(h) = \mbox{\rm col}(J_0(h),\,\ldots,\,J_3(h)),$$
 $\mbox{\boldmath $A$}_0$ and $\mbox{\boldmath $A$}_1$ are given in \eqref{b8}, and
\begin{align*}
&\mbox{\boldmath $A$}_2 =
\left( \begin{array}{ccccc}
0 & -5 & -1 & -1 & 4 \\
0 &  0 & -5 & -1 & 4 \\
0 &  0 &  0 & -5 & 4 \\
0 &  0 &  0 &  0 & 0
\end{array} \right), \quad
\mbox{\boldmath $A$}_3 =
\left( \begin{array}{ccccc}
20 & -80 &   0 &   0 &   0 \\
20 &  20 & -80 &   0 &   0 \\
20 &  20 &  20 & -80 &   0 \\
20 &  20 &  20 &  20 & -80
\end{array} \right).
\end{align*}
\end{cor}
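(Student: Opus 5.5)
The plan is to specialize Theorem \ref{st2} to $n=5$ with $b_1=b_2=b_3=0$, $b_4=-\frac14$, $b_5=\frac15$, and then multiply through by a suitable scalar polynomial to clear the inverse. Writing \eqref{t5} as
\[
2\mbox{\boldmath $M$}(h)\big(\widetilde{\mbox{\boldmath $X$}}'-\mbox{\boldmath $J$}\big)+2\mbox{\boldmath $T$}_2\mbox{\boldmath $T$}_4^{-1}\mbox{\boldmath $K$}=\widetilde{\mbox{\boldmath $X$}},
\qquad
\mbox{\boldmath $M$}=\mbox{\boldmath $T$}_1+\mbox{\boldmath $T$}_2\mbox{\boldmath $T$}_4^{-1}\mbox{\boldmath $T$}_3,
\]
we know from the smooth case (Theorem \ref{lmm2} together with \eqref{b7}) that
\[
2\mbox{\boldmath $M$}(h)\mbox{\boldmath $X$}'=\mbox{\boldmath $X$}
\quad\text{is equivalent to}\quad
P(h)\mbox{\boldmath $X$}'=(\mbox{\boldmath $A$}_0+\mbox{\boldmath $A$}_1h)\mbox{\boldmath $X$} .
\]
Hence, as matrices, $P(h)\,(2\mbox{\boldmath $M$}(h))^{-1}=\mbox{\boldmath $A$}_0+\mbox{\boldmath $A$}_1h$. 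The justification is that both sides are determined by the same linear algebra on the $h$-polynomial entries, and the identity of rational matrix functions can be checked directly once $\mbox{\boldmath $M$}$ is computed.

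Multiplying the inhomogeneous equation on the left by $P(h)(2\mbox{\boldmath $M$})^{-1}$ then gives
\[
P(h)\widetilde{\mbox{\boldmath $X$}}'=(\mbox{\boldmath $A$}_0+\mbox{\boldmath $A$}_1h)\widetilde{\mbox{\boldmath $X$}}+P(h)\mbox{\boldmath $J$}-(\mbox{\boldmath $A$}_0+\mbox{\boldmath $A$}_1h)\,\mbox{\boldmath $T$}_2\mbox{\boldmath $T$}_4^{-1}\mbox{\boldmath $K$}.
\]
So it remains to show
\[
\mbox{\boldmath $A$}_2+\mbox{\boldmath $A$}_3h=-(\mbox{\boldmath $A$}_0+\mbox{\boldmath $A$}_1h)\,\mbox{\boldmath $T$}_2\mbox{\boldmath $T$}_4^{-1}.
\]

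The concrete steps are as follows.

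\emph{Step 1.} Write $\mbox{\boldmath $T$}_1$ through $\mbox{\boldmath $T$}_4$ from \eqref{st3} for these coefficients. $\mbox{\boldmath $T$}_1=h\mbox{\boldmath $E$}_4$, since $b_1=b_2=b_3=0$. $\mbox{\boldmath $T$}_2$ is the $4\times5$ band matrix built from $b_4,b_5$. $\mbox{\boldmath $T$}_4$ is $5\times5$ lower triangular with diagonal $(5b_5,7b_5,9b_5,11b_5,13b_5)=(1,\frac75,\frac95,\frac{11}5,\frac{13}5)$ and first subdiagonal $(6b_4,8b_4,\dots)$. $\mbox{\boldmath $T$}_3$ is $5\times4$ with the $-2kh$ subdiagonal and the $(k)b_4$-type entries in its first column only.

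\emph{Step 2.} Invert the triangular $\mbox{\boldmath $T$}_4$ explicitly and form $\mbox{\boldmath $T$}_2\mbox{\boldmath $T$}_4^{-1}$. This is a constant $4\times5$ matrix.

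\emph{Step 3.} Multiply by $-(\mbox{\boldmath $A$}_0+\mbox{\boldmath $A$}_1h)$ and compare entrywise with $\mbox{\boldmath $A$}_2$ and $\mbox{\boldmath $A$}_3$.

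As a sanity check on signs, I will verify the key relation directly from \eqref{t4} with $j=-1$. With $n=5$, the $K$-term enters as $+K_{i+1,1}$ in each of the five equations for $i=-1,\dots,3$, so $\mbox{\boldmath $K$}$ consists exactly of $K_{0,1},\dots,K_{4,1}$. This agrees with \eqref{t7}.

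The main obstacle is bookkeeping rather than conceptual. Two points need care. The first is confirming the identity $P(h)(2\mbox{\boldmath $M$})^{-1}=\mbox{\boldmath $A$}_0+\mbox{\boldmath $A$}_1h$; I would obtain this by computing $2\mbox{\boldmath $M$}(\mbox{\boldmath $A$}_0+\mbox{\boldmath $A$}_1h)$ and checking that it equals $20h(20h+1)\mbox{\boldmath $E$}_4$. The second is the sign and scale conventions in the $\mbox{\boldmath $K$}$-term. If a discrepancy in an overall factor appears, I would recheck it against the last row. Row 4 of $\mbox{\boldmath $A$}_0$ vanishes, so the last row of $\mbox{\boldmath $A$}_2$ must vanish too, and the last row of $\mbox{\boldmath $A$}_3$ must be $-(\text{row }4\text{ of }\mbox{\boldmath $A$}_1)\,\mbox{\boldmath $T$}_2\mbox{\boldmath $T$}_4^{-1}$. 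Since row 4 is the simplest, this gives a quick way to fix normalizations.
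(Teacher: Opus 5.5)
Your route is the paper's route. The paper gives no separate argument; it obtains this corollary by specializing Theorem \ref{st2} to $n=5$, $b_1=b_2=b_3=0$, $b_4=-\frac14$, $b_5=\frac15$ and clearing the inverse.

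Your first identity is fine. The direct check $2\mbox{\boldmath $M$}(\mbox{\boldmath $A$}_0+\mbox{\boldmath $A$}_1h)=20h(20h+1)\mbox{\boldmath $E$}_4$ does hold, and that direct check is what justifies the matrix identity. Equivalence of the two ODE systems alone would not.

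\textbf{The gap is in the reduction step.} You dropped a factor of $2$, so the identity you say ``remains to show'' is false. Multiply $2\mbox{\boldmath $M$}(\widetilde{\mbox{\boldmath $X$}}'-\mbox{\boldmath $J$})+2\mbox{\boldmath $T$}_2\mbox{\boldmath $T$}_4^{-1}\mbox{\boldmath $K$}=\widetilde{\mbox{\boldmath $X$}}$ on the left by $P(2\mbox{\boldmath $M$})^{-1}=\mbox{\boldmath $A$}_0+\mbox{\boldmath $A$}_1h$. The $\mbox{\boldmath $K$}$-term becomes $2(\mbox{\boldmath $A$}_0+\mbox{\boldmath $A$}_1h)\mbox{\boldmath $T$}_2\mbox{\boldmath $T$}_4^{-1}\mbox{\boldmath $K$}$. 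The $2$ in front of $\mbox{\boldmath $T$}_2\mbox{\boldmath $T$}_4^{-1}\mbox{\boldmath $K$}$ is not absorbed, because you inverted $2\mbox{\boldmath $M$}$, not $\mbox{\boldmath $M$}$. The correct target is
\[
\mbox{\boldmath $A$}_2+\mbox{\boldmath $A$}_3h=-2(\mbox{\boldmath $A$}_0+\mbox{\boldmath $A$}_1h)\,\mbox{\boldmath $T$}_2\mbox{\boldmath $T$}_4^{-1},
\qquad
\mbox{\boldmath $T$}_2\mbox{\boldmath $T$}_4^{-1}=
\left(\begin{array}{ccccc}
-\frac{1}{28} & \frac{1}{7} & 0 & 0 & 0\\
-\frac{5}{168} & -\frac{5}{252} & \frac{1}{9} & 0 & 0\\
-\frac{25}{924} & -\frac{25}{1386} & -\frac{5}{396} & \frac{1}{11} & 0\\
-\frac{625}{24024} & -\frac{625}{36036} & -\frac{125}{10296} & -\frac{5}{572} & \frac{1}{13}
\end{array}\right).
\]
With the factor restored, Step 3 goes through entrywise and reproduces $\mbox{\boldmath $A$}_2$ and $\mbox{\boldmath $A$}_3$ exactly. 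For example, $-2\cdot 280\cdot(-\frac1{28},\frac17)=(20,-80)$, which is the first row of $\mbox{\boldmath $A$}_3$. Your version instead gives $(10,-40)$, and every entry comes out halved.

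Your proposed last-row normalization check would not catch this. It uses the same incorrect formula, so it would produce $(10,10,10,10,-40)$ against $(20,20,20,20,-80)$ without pointing to the source of the discrepancy.

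\textbf{A smaller slip.} With $b_1=b_2=b_3=0$, the only constant entry of $\mbox{\boldmath $T$}_3$ is $4b_4=-1$. It sits in the first row and last column, since the first row in \eqref{st3} is $(b_1,2b_2,3b_3,4b_4)$. It is not in the first column, which is $(0,-2h,0,0,0)^T$. Placing it wrongly would spoil $\mbox{\boldmath $M$}$, so read $\mbox{\boldmath $T$}_3$ directly off \eqref{st3}.
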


In \eqref{h5}, the asymptotic expansions of {\boldmath $K$}$(h)$ and {\boldmath $J$}$(h)$ near $h=0$ 
can obtained by substituting the asymptotic expansions of $x_j(h)$ and $y_j(h)$, $j=1,2$ into 
$$K_{i,1}(h) = x_2^i(h)y_2(h)-x_1^i(h)y_1(h), 
\quad J_i(h) = x_2^i(h) y_2(h)x'_2(h) - x_1^i(h) y_1(h)x'_1(h).$$
Then the asymptotic expansion of $\widetilde{\mbox{\boldmath $X$}}(h)$ can be derived by \eqref{h5}
with undetermined coefficients.
Note that $\widetilde{\mbox{\boldmath $X$}}(h)$ is analytic in $h$ for system \eqref{sys} with $0<\theta<\pi$.
For $\theta=\frac{\pi}{2}$, using
\[I^+_{i,1}(0)=2\!\int_1^{\frac{5}{4}}x^{i+2} \Big(\frac{1}{2}-\frac{2}{5}x\Big)^{\frac{1}{2}}\mathrm{d}x,\quad
\frac{\mathrm{d}}{\mathrm{d}h}I_{i,1}^+(0)=2\!\int_1^{\frac{5}{4}}x^{i-2} \Big(\frac{1}{2}-\frac{2}{5}x\Big)^{-\frac{1}{2}}\mathrm{d}x,\]
from \eqref{h5} we can get the asymptotic expansions of $I^+_{i,1}(h)$, $i=0,1,2,3$, as follows
\begin{equation}\label{zz1}
\begin{split}
I^+_{0,1}(h) = \,& \frac{17\sqrt{10}}{420} + \frac{2\sqrt{10}}{5}(1 + 4k_4)h - \frac{9\sqrt{10}}{3125}(13 + 1232k_4)h^2+\cdots,\\
I^+_{1,1}(h) = \,& \frac{113\sqrt{10}}{2520} + 4\sqrt{10} k_4h + \frac{\sqrt{10}}{250}(103 - 1008k_4)h^2+\cdots,\\
I^+_{2,1}(h) = \,& \frac{691\sqrt{10}}{13860} + \sqrt{10}h + \frac{4\sqrt{10}}{75}(19 - 84k_4)h^2+\cdots,\\
I^+_{3,1}(h) = \,& \frac{20047\sqrt{10}}{360360} + \frac{7\sqrt{10}}{6}h + \frac{3\sqrt{10}}{5}(3 - 8k_4)h^2+\cdots,\\
\end{split}
\end{equation}
where $k_4=\frac{\sqrt{5}}{5}\mbox{\rm arcsinh}(\frac{1}{2})$.
For $\theta \in (0,\frac{\pi}{2})\cup (\frac{\pi}{2},\pi)$, the asymptotic expansions of integrals $I^+_{i,1}(h)$
are omitted here, because the expressions of coefficients are too long.

\section{Proof of Theorems \ref{thm1} and \ref{thm2}}\label{sec4}


In this section, we shall prove Theorems \ref{thm1} and \ref{thm2} by using Corollary \ref{lmm3}, Lemma \ref{lmm6} and Theorem \ref{th4}.

\begin{proof}[Proof of Theorem \ref{thm1}]
Melnikov function $M_1(h)$ in \eqref{g1} can be simplified into
\begin{equation}\label{b2b}
\begin{split}
M_1(h)&=\, \oint_{\Gamma_h} \left(Q_1(x,y) + \int \frac{\partial}{\partial x} P_1(x,y)\mathrm{d}y\right)\mathrm{d}x\\
&=\, \sum_{i=0}^3 B_{i1} I_{i,1}(h) + \sum_{i=4}^5 B_{i1}I_{i-4,3}(h),
\end{split}
\end{equation}
where
\begin{equation}\label{b14b}
\begin{split}
&B_{01} = a_{101}+b_{011},\quad B_{11} = 2a_{201}+b_{111},\quad B_{21} = 3a_{301}+b_{211}, \\
& B_{31} = 4a_{401}+b_{311},\quad B_{41} = \frac{1}{3}a_{121}+b_{031},\quad B_{51} = \frac{2}{3}a_{221}+b_{131}.
\end{split}
\end{equation}
By the statements (ii) and (iii) of Corollary \ref{lmm3}, we obtain
\begin{equation}\label{b14}
\begin{split}
I_{0,3}(h)&=\,\frac{30}{17}hI_{0,1}(h)+\frac{3}{34}I_{3,1}(h),\\
I_{1,3}(h)&=\, \frac{15}{323}hI_{0, 1}(h) + \frac{30}{19}hI_{1, 1}(h) + \frac{105}{1292}I_{3, 1}(h).
\end{split}
\end{equation}
Then substituting  \eqref{b14} into \eqref{b2b} yields
\begin{equation*}\label{b15}
\begin{split}
M_1(h)=\,&\Big(B_{01}+\frac{30}{17} B_{41} h + \frac{15}{323} B_{51} h\Big)\,I_{0,1}(h)+\Big(B_{11}+\frac{30}{19}B_{51}h\Big)\,I_{1,1}(h)\\
&+B_{21}I_{2,1}(h) + \Big(B_{31}+\frac{3}{34}B_{41}+\frac{105}{1292}B_{51}\Big)\,I_{3,1}(h).
\end{split}
\end{equation*}
By \eqref{b13a} for $0<-h\ll 1$ $M_1(h)$ can be expanded as
\begin{equation}\label{b15a}
M_1(h)= c_{01} +  c_{11}|h|^{\frac{3}{4}} + c_{21}h\ln\!|h| +  c_{31}h + c_{41}|h|^{\frac{5}{4}} + c_{51}h|h|^{\frac{3}{4}}+\cdots,
\end{equation}
where
\begin{equation}\label{b16}
\begin{split}
c_{01} &=\,\sqrt{2}\Big(\frac{25}{84} B_{01}\!+\!\frac{125}{504} B_{11}\!+\!\frac{625}{2772} B_{21}\!+\!
\frac{15625}{72072} B_{31}\!+\!\frac{15625}{816816} B_{41}\!+\!\frac{78125}{4434144} B_{51}\Big), \\
c_{11} &=\,k_1B_{01},\quad
c_{21}= -\frac{\sqrt{2}}{10}(2 B_{01}+5 B_{11}),\\
c_{31} &=\,\frac{k_2}{2}(2B_{01} +5 B_{11}) +\sqrt{2}\Big(3 B_{11} +5 B_{21} +\frac{25}{6} B_{31} +\frac{25}{28} B_{41} +\frac{125}{168} B_{51}\Big), \\
c_{41} &=\,
\frac{k_3}{21}(21 B_{01}+30 B_{11}+50 B_{21}),\\
c_{51} &=\, 
- \frac{k_1}{1750}(663 B_{01}+780 B_{11}+900 B_{21}+1000 B_{31}-3000 B_{41}).
\end{split}
\end{equation}
By \eqref{b16}, it is straightforward to get
\begin{equation*}
c_{j1}=0,\;j=0,1,\ldots,5\ \Longleftrightarrow\ B_{j1}=0,\;j=0,1,\ldots,5.
\end{equation*}
Then by \eqref{b2b} and \eqref{b15a} $M_1(h)\equiv0$ if and only if $B_{01}=B_{11}=\cdots=B_{51}=0$, which yields \eqref{g2}. 

 Because $c_{01}$, $c_{11}$, $\ldots$, $c_{51}$ are linear in 
parameters $a_{ij1}$ and $b_{ij1}$, we have
\begin{align*}
{\rm rank}\, \frac{\partial(c_{01},\,c_{11},\,c_{21},\,c_{31},\,c_{41},\,c_{51})}{\partial (a_{101},\,a_{121},\,a_{201},\,a_{221},\,a_{301},\,b_{311})} =6.
\end{align*}
By Theorem \ref{th4}, $M_1(h)$ can have at most $5$ zeros (counting multiplicity) for $0<-h\ll 1$ for system \eqref{sys1},
and this upper bound can be reached for simple zeros with proper values of parameters. The proof is completed.
\end{proof}


\begin{proof}[Proof of Theorem \ref{thm2}]
By Theorem \ref{thm1} $P_1(x,y)$ and $Q_1(x,y)$ should satisfy \eqref{g2} if $M_1(h)\equiv0$. Then it is easy to get
\begin{equation*} 
\begin{split}
r_1(x,y)=\, -A_1x - A_2x^2 - \frac{1}{3}A_3x^3-\frac{1}{15}A_4x(5x^5-6x^4+15y^2),
\end{split}
\end{equation*}
from
\begin{equation*}
Q_1(x,y)\mathrm{d}x-P_1(x,y)\mathrm{d}y=r_1(x,y)\mathrm{d}H+\mathrm{d}R_1(x,y),
\end{equation*}
where $R_1(x,y)$ is a polynomial of degree 11 in $(x,y)$ and
\begin{equation} \label{g2b}
\begin{split}
&A_1 = a_{111} + 2b_{021}, \quad A_2 = a_{211} + b_{121},\\
&A_3 = 3a_{311} + 2b_{221},\quad A_4 = a_{131} + 4b_{041}.
\end{split}
\end{equation}
By using the Fran\c coise's algorithm \cite{F1996,I1998}, we have the second-order Melnikov function
\begin{equation}\label{M2A}
M_2(h)=\, \oint_{\Gamma_h} Q(x,y)\mathrm{d}x-P(x,y)\mathrm{d}y,
\end{equation}
where
\begin{equation*}
\begin{split}
Q(x,y)&=\, Q_2(x,y)+r_1(x,y)Q_1(x,y),\\
P(x,y)&=\, P_2(x,y)+r_1(x,y)P_1(x,y).
\end{split}
\end{equation*}
Then we can rewrite $M_2(h)$ in \eqref{M2A} to the form
\begin{equation}\label{M2}
\begin{split}
&M_2(h)=\, \oint_{\Gamma_h} \left(Q(x,y) + \int \frac{\partial}{\partial x} P(x,y)\mathrm{d}y\right)\mathrm{d}x\\
=&\, \sum_{i=0}^9 B_{i2} I_{i,1}(h) + \sum_{i=10}^{17} B_{i2}I_{i-10,3}(h) + \sum_{i=18}^{23} B_{i2}I_{i-18,5}(h) + B_{242}I_{0,7}(h),
\end{split}
\end{equation}
where coefficients $B_{i2}$, $i=0,1,\ldots,24$, are polynomials in parameters $a_{ijl}$ and $b_{ijl}$, $l=1,2$.
By Corollary \ref{lmm3} and \eqref{M2}, we can get
\begin{equation}\label{xx4}
\begin{split}
M_2(h)=\,& (B_{02}+C_1h+C_2h^2+C_3h^3)\,I_{0,1}(h)+(B_{12} + C_4h+C_5h^2)\,I_{1,1}(h)\\
& + (B_{22}+C_6h+C_7h^2)\,I_{2,1}(h) + (C_8+C_9h+C_{10}h^2)\,I_{3,1}(h).
\end{split}
\end{equation}
By \eqref{b13a} for $0<-h\ll 1$ $M_2(h)$ can be expanded as
\begin{equation*}
M_2(h)=\,c_{02} + c_{12}|h|^{\frac{3}{4}} + c_{22}h\ln\!|h|+c_{32}h + c_{42}|h|^{\frac{5}{4}} + c_{52}h|h|^{\frac{3}{4}} + \cdots,
\end{equation*}
where
\begin{align*}
c_{02}=\,& \sqrt{2}\Big(\frac{25}{84}B_{02} + \frac{125}{504}B_{12} + \frac{625}{2772}B_{22} + \frac{15625}{72072}B_{32} + \frac{15625}{72072}B_{42} \\
& +\! \frac{78125}{350064}B_{52} + \frac{390625}{1662804}B_{62} + \frac{1953125}{7759752}B_{72} + \frac{48828125}{178474296}B_{82} \\
& +\! \frac{9765625}{32449872}B_{92} + \frac{15625}{816816}B_{102}+ \frac{78125}{4434144}B_{112} + \frac{390625}{23279256}B_{122} \\
& +\! \frac{1953125}{118982864}B_{132} + \frac{1953125}{118982864}B_{142}+ \frac{9765625}{584097696}B_{152} \\
& +\! \frac{48828125}{2823138864}B_{162} + \frac{244140625}{13464200736}B_{172} + \frac{9765625}{6425074656}B_{182}\\
& +\! \frac{48828125}{33877666368}B_{192} + \frac{244140625}{175034609568}B_{202} + \frac{1708984375}{9394165199232}B_{212}\Big),\\
c_{12}=\,& k_1B_{02},\quad c_{22} =\, -\frac{\sqrt {2}}{10} ( 2B_{02} + 5B_{12}),\\
\cdots \,\,\,& \cdots.
\end{align*}

Note that coefficients $c_{i2}$, $i\ge 0$, are linear in parameters $a_{ij2}$ and $b_{ij2}$.
It is straightforward to get a unique solution in $a_{102},\,a_{122},\,b_{112},\,b_{212},\,b_{132}$ and $b_{312}$ from
\begin{equation}\label{q1}
\begin{split}
c_{02}=c_{12}=c_{22}=c_{32}=c_{42}=c_{52}=0.
\end{split}
\end{equation}
To finish the proof, we only need to discuss the solutions of
\begin{equation}\label{q2}
\begin{split}
c_{62}=c_{72}=c_{82}=c_{92}=c_{102}=c_{112}=0,
\end{split}
\end{equation}
with \eqref{q1} holding.

We can find all the solutions of \eqref{q2} by the following cases
\begin{equation*}
\begin{split}
&\,1. \,\,A_4\neq0;\\
&\,2. \,\,A_4=0,\quad A_3\neq0;\\
&\,3. \,\,A_4=0,\quad A_3=0,\quad A_2\neq0;\\
&\,4. \,\,A_4=0,\quad A_3=0,\quad A_2=0,\quad A_1\neq0;\\
&\,5. \,\,A_4=0,\quad A_3=0,\quad A_2=0,\quad A_1=0.\\
\end{split}
\end{equation*}
For case 1, because all $c_{i2}$, $i=6,7,\ldots,11$, are linear in $a_{201}$, $a_{221}$, $a_{401}$, $b_{031}$, $a_{021}$ and $a_{041}$, and
\begin{align*}
{\rm det}\,\frac{\partial(c_{62},\,c_{72},\,c_{82},\,c_{92},\,c_{102},\,c_{112})} {\partial (a_{201},\,a_{221},\,a_{401},\,b_{031},\,a_{021},\,a_{041})} 
= -\frac{325679779052734375000 k_1k_3A_4^6}{38041092419836172033367},
\end{align*}
we can get a unique solution in $a_{201},\,a_{221},\,a_{401},\,b_{031}$,\, $a_{021}$ and $a_{041}$ from \eqref{q2},
which implies $M_2(h)\equiv0$ by \eqref{xx4}. Note that by \eqref{g2b} $A_k$, $k=1,2,3,4$, are independent with 
${\mbox{\boldmath $\delta$}_{1}}=(a_{102},\,a_{122},\,b_{112},\,b_{212},\,b_{132},\,b_{312},\,
a_{201},\,a_{221},\,a_{401},\,b_{031},\,a_{021},\,a_{041})$.
Then we have
\begin{align*}
{\rm rank}\,\frac{\partial(c_{02},\,c_{12},\,\ldots,\,c_{112})}{\partial {\mbox{\boldmath $\delta$}_{1}}} =12\,\,\mbox{for}\,\,A_4\neq0.
\end{align*}
By Theorem \ref{th4}, $M_2(h)$ has at most $11$ zeros (counting multiplicity) for $0<-h\ll 1$ for $A_4 \neq0$,
and this upper bound can be reached.

The remaining cases can be similarly discussed. For the cases 2, 3, 4, no more than 10 simple zeros are found for $M_2(h)$ for $0<-h\ll 1$.
In the case 5, $M_2(h)$ is equivalent to zero with \eqref{q1} holding. The details are omitted here.
\end{proof}

\section{Proof of Theorem \ref{th3}}\label{sec5}
In this section, we shall present the proof of Theorem \ref{th3} 
for the cases of $\theta=\pi$, $\theta=\frac{\pi}{2}$ and $0<|\theta-\frac{\pi}{2}|<\frac{\pi}{2}$,
respectively.

\subsection{Case of $\theta = \pi$}\label{SubSec43}

In this case we have $\Gamma^\pm_h=\Gamma_h|_{\pm y\geq0}$.
Because of the symmetry of $\Gamma_h$, it is easy to get
\begin{equation}\label{a1}
\begin{split}
&I_{i,j}^{+}(h)=\frac{1}{2}I_{i,j}(h)\,\,\mbox{for}\,\, j\,\, \mbox{odd}.
\end{split}
\end{equation}
Since the separation line is $y=0$, we have $K_{i,j}(h)\equiv0$ for $j\geq1$.
Note that $4K_{5,0}(h)=5K_{4,0}(h)$ by \eqref{q3}.
Then by Corollary \ref{lmm3} and \eqref{a1}, for $\widetilde M(h)$ in \eqref{m1a} we have
\begin{equation}\label{a3}
\begin{split}
\widetilde{M}(h)=\,& (B_0 \!+\! B_1h)I_{0,1}(h) + (B_2\! +\! B_3h)I_{1,1}(h) + B_4I_{2,1}(h) + B_5I_{3,1}(h)
  \\& + (B_6+B_7h+48B_8h^2)K_{1,0}(h) + (B_9+B_{10}h)K_{2,0}(h)
  \\&+ (B_{11}+B_{12}h)K_{3,0}(h) + (B_{13}+B_8h)K_{4,0}(h),\\
\end{split}
\end{equation}
where $B_l$, $l=0,1,\ldots,13$, are linear combinations of free coefficients $a_{ij}^\pm$ and $b_{ij}^\pm$.
It is straightforward to verify that we can take $\mbox{\boldmath $\delta$}_{2}=(B_0,\,B_1,\,\ldots,\,B_{13})$ as a free parameter vector.

For \eqref{a3} in order to get the asymptotic expansion of $\widetilde{M}(h)$ for $0<-h\ll 1$,
we only need to investigate $K_{i,0}(h)$, $1\le i\le 4$. 
Because $\Gamma_h^+$ is the orbital arc from $(x_1(h),0)$ to $(x_2(h),0)$, we have
\begin{equation}\label{st5}
\begin{split}
&K_{i,0}(h)=\,x^{i}_2(h)-x^i_1(h).\\
\end{split}
\end{equation}
Then substituting \eqref{a6} and \eqref{a7} into \eqref{st5} we get the asymptotic expansions of $K_{i,0}(h)$, $1\le i\le 4$.

Then we can have the asymptotic expansion of $\widetilde{M}(h)$ for $0<-h\ll 1$ as follows
\begin{equation*}\label{m2}
\begin{split}
\widetilde{M}(h) =\,& c_0 + \sum_{j=0}^{+\infty}\Big(
c_{5j+1}h^j|h|^\frac{1}{4} +
c_{5j+2}h^j|h|^\frac{2}{4} +
c_{5j+3}h^j|h|^\frac{3}{4}\\
& \qquad\qquad   +
c_{5j+4}h^{j+1}\ln\!|h| +
c_{5j+5}h^{j+1}\Big),
\end{split}
\end{equation*}
where
\begin{align*}
&c_{0} =\, 25\Big( \frac{\sqrt{2}}{84}B_{0} + \frac{5\sqrt{2}}{504}B_{2} + \frac{25\sqrt{2}}{2772}B_{4} + \frac{625\sqrt{2}}{72072}B_{5} + \frac{1}{20}B_{6} + \frac{1}{16}B_{9}+ \frac{5}{64}B_{11}\\
&\qquad  + \frac{25}{256}B_{13} \Big),\quad c_{1} =\, -\sqrt{2}B_{6},\quad  c_{2} =\, - \frac{2}{5}B_{6} - 2B_{9},
\\
&
c_{3} =\, k_{1}B_{0} - \frac{\sqrt{2}}{25}(7B_{6} + 20B_{9} + 50B_{11}),
\\
&\cdots\,\, \cdots.
\end{align*}

Note that coefficients $c_i,\,i\geq0,$ are linear in $\mbox{\boldmath $\delta$}_{2}$.
Because
\begin{align*}
{\rm rank}\,\frac{\partial(c_0,\,c_1,\,\ldots,\,c_{12})}{\partial \mbox{\boldmath $\delta$}_{2}} =13,
\end{align*}
by \eqref{a3} we get $\widetilde M(h)\equiv0$ if and only if $c_0=c_1=\cdots=c_{12}=0$, which implies
$$c_j=O(|c_0,\,c_1,\,\ldots,\,c_{12}|),\quad j\ge 13.$$
By Theorem \ref{th4}, for $\theta=\pi$ there exists $0<\varepsilon_0\ll1$ such that $\widetilde M(h)$ has at most $12$ zeros (counting multiplicity)  for $h\in(-\varepsilon_0,0)$, 
and the upper bound can be reached by proper perturbations in system \eqref{sys}.


\subsection{Case of $\theta = \frac{\pi}{2}$}

In this case $\Gamma^\pm_h=\Gamma_h\bigcap \{\pm(x-1)\ge0\}$.
By Corollaries \ref{lmm3}, \ref{lm1} and \eqref{q4},  $\widetilde M(h)$ in \eqref{m1a} can be simplified as
\begin{equation}\label{xx5}
\begin{split}
\widetilde{M}(h)=&\, (\widetilde B_0 + \widetilde B_1h)I_{0,1}(h) + (\widetilde B_2 + \widetilde B_3h)I_{1,1}(h) + \widetilde B_4I_{2,1}(h)\\
& + \widetilde B_5I_{3,1}(h)+ (\widetilde B_6 + \widetilde B_7h)I^+_{0,1}(h) + (\widetilde B_8 + \widetilde B_9h)I^+_{1,1}(h) \\
&+ \widetilde B_{10}I^+_{2,1}(h)
 + \widetilde B_{11}I^+_{3,1}(h)+  (\widetilde B_{12}+\widetilde B_{13}h+\widetilde B_{14}h^2)K_{0,1}(h),
\end{split}
\end{equation}
where the coefficients $\widetilde B_{0},\,\widetilde B_{1},\,\ldots,\,\widetilde B_{14}$ can be treated as free parameters.
To obtain the asymptotic expansion of $\widetilde M(h)$ for $0<-h\ll 1$, we need to compute the corresponding asymptotic expansions
of $I_{0,1}^+(h)$, $I_{1,1}^+(h)$, $I_{2,1}^+(h)$, $I_{3,1}^+(h)$ and $K_{0,1}(h)$.

By \eqref{h12} we have
\begin{equation}\label{h15}
\begin{split}
K_{0,1}(h) &=\, y_2(h)-y_1(h)= -\frac{\sqrt{10 + 200h}}{5}\\
&=\, -\frac{\sqrt{10}}{5} - 2\sqrt{10}h + \cdots + 67031250\sqrt{10}h^8 + \cdots
\end{split}
\end{equation}
for $0<-h\ll1$. 
Then by Lemma \ref{lmm6}, \eqref{zz1} and \eqref{h15}  for $0<-h\ll 1$ $\widetilde{M}(h)$ in \eqref{xx5} can be expanded as
\begin{equation*}\label{}
\widetilde{M}(h) =\, \bar c_0 + \sum_{j=0}^{+\infty}\Big(\bar c_{4j+1}|h|^\frac{3}{4} + \bar c_{4j+2}h\ln\!|h| + \bar c_{4j+3}h + \bar c_{4j+4}|h|^\frac{5}{4}\Big)h^j,
\end{equation*}
where
\begin{equation*}\label{}
\begin{split}
\bar c_0 =\,& \sqrt{2} \Big(\frac{25}{84}\widetilde B_{0} + \frac{125}{504}\widetilde B_{2} + \frac{625}{2772}\widetilde B_{4} + \frac{1562}{72072}\widetilde B_{5} + \frac{17\sqrt{5}}{420}\widetilde B_{6}+ \frac{113\sqrt{5}}{2520}\widetilde B_{8}\\
&
+ \frac{691\sqrt{5}}{13860}\widetilde B_{10} + \frac{20047\sqrt{5}}{360360}\widetilde B_{11} - \frac{\sqrt{5}}{5}\widetilde B_{12}\Big),\\
\bar c_1 =\,& k_1 \widetilde B_0, \quad \widetilde c_2 = -\frac {\sqrt {2}}{10} ( 2\widetilde B_0+5\widetilde B_2),\\
\cdots\,\,\,&\cdots.
\end{split}
\end{equation*}

Note that coefficients $\bar c_i$, $i\ge0$, are linear in $\widetilde B_0$, $\widetilde B_1$, $\ldots$, $\widetilde B_{14}$.
It is straightforward to get $\bar c_{4j+1}=\bar c_{4j+2}=\bar c_{4j+4}=0$ for $j\ge2$ if $\bar c_0=\bar c_1=\cdots=\bar c_8=0$.
Let
\begin{equation*}
 \mbox{\boldmath $\bar c$} = (\bar c_0,\,\bar c_1,\,\ldots,\,\bar c_{8},\,\bar c_{11},\,\bar c_{15}
,\,\bar c_{19},\,\bar c_{23},\,\bar c_{27},\,\bar c_{31}),\,\,
 \mbox{\boldmath $\delta$}_{3} = (\widetilde B_0,\,\widetilde B_1,\,\ldots,\,\widetilde B_{14}).
\end{equation*}
We can get {\boldmath $\bar c $} $=0$ if and only if {\boldmath $\delta$}$_3=0$, i.e.
$$ \mbox{rank}\, \frac{\partial\mbox{\boldmath $\bar c$}}{\partial {\mbox{\boldmath $\delta$}_{3}}} =15.$$
Then by Theorem \ref{th4}, for $\theta=\frac{\pi}{2}$ there exists $0<\varepsilon_0\ll1$ such that $\widetilde M(h)$ has at most $14$  zeros (counting multiplicity)  for $h\in(-\varepsilon_0,0)$, 
and the upper bound can be reached by proper perturbations in system \eqref{sys}.


\subsection{Case of\, $0<|\theta-\frac{\pi}{2}|<\frac{\pi}{2}$} 


By Corollaries \ref{lmm3}, \ref{lm1} and Lemma \ref{lm3}, for $\widetilde M(h)$ in \eqref{m1a} we have
\begin{equation}\label{h21}
\begin{split}
\widetilde{M}(h)
=\,& (\widehat B_0 + \widehat B_1h)I_{0,1}(h) + (\widehat B_2 + \widehat B_3h)I_{1,1}(h) + \widehat B_4I_{2,1}(h) + \widehat B_5I_{3,1}(h)\\
&+ (\widehat B_6 + \widehat B_7h)I^+_{0,1}(h) + (\widehat B_8 + \widehat B_9h)I^+_{1,1}(h) + \widehat B_{10}I^+_{2,1}(h) + \widehat B_{11}I^+_{3,1}(h)\\
&+ (\widehat B_{12}+\widehat B_{13}h + 48k^3\widehat B_{14}h^2) K_{0,1}(h) + (\widehat B_{15}+\widehat B_{16}h)K_{0,2}(h)\\
&+ (\widehat B_{17}+\widehat B_{18}h)K_{0,3}(h) + (\widehat B_{19}+\widehat B_{14}h)K_{0,4}(h),
\end{split}
\end{equation}
where coefficients $\widehat B_i$, $0\le i\le 19$, can be taken as free parameters.

It is difficult to compute the asymptotic expansion of $\widetilde M(h)$ for $0<-h\ll 1$ if we keep taking $\theta$
as a free parameter.
For simplicity we take $\theta=\arctan\frac{121\sqrt{6}}{100}$. Then $x_a=\frac{11}{10}$,
$y_a=\frac{121}{1000}\sqrt{6}$ and
$$x_b=0.89791472161636914533040398\cdots$$ by \eqref{ha3}.
Using \eqref{h2} and \eqref{h3}, we get the power series expansions of {\boldmath $K$}$(h)$ and {\boldmath $J$}$(h)$ for $0<-h\ll 1$ for \eqref{h5},
by which we can further get the corresponding power series expansions of $I^+_{i,1}(h)$ near $h=0$, $i=0,1,2,3$.
By \eqref{h2} and \eqref{h3} we can also get the power series expansions of $K_{0,1}(h)$, $K_{0,2}(h)$, $K_{0,3}(h)$ and $K_{0,4}(h)$.
Then by Lemma \ref{lmm6} and \eqref{h21}, for $0<-h\ll 1$ we can compute the coefficients $\hat c_i$, $0\le i\le 51$ of $\widetilde{M}(h)$ given by
\begin{equation}\label{h24}
\widetilde{M}(h) =\, \hat c_0 + \sum_{j=0}^{+\infty}\Big(\hat c_{4j+1}|h|^\frac{3}{4} + \hat c_{4j+2}h\ln\!|h| + \hat c_{4j+3}h + \hat c_{4j+4}|h|^\frac{5}{4}\Big)h^j.
\end{equation}

In \eqref{h24} coefficients $\hat c_i,\,i\geq0,$ are linear in $\widehat B_j,\,j=0,1,\ldots,19$.
It is easy to get that $\hat c_1=\hat c_2=\hat c_4=\hat c_5=\hat c_6=\hat c_8=0$ if and only if $\widehat B_0=\widehat B_1=\widehat B_2=\widehat B_3=\widehat B_4=\widehat B_5=0$,
which further implies that all $\hat c_{4j+1}$, $\hat c_{4j+2}$ and $\hat c_{4j+4}$ vanish for $j\ge 2$ by \eqref{h21} and \eqref{h24}.
Because
\begin{equation*}
\begin{split}
{\rm det}\,\frac{\partial\mbox{\boldmath $\hat c$}}{\partial {\mbox{\boldmath $\delta$}_{4}}} \approx 1.9220607607207538701280448\times10^6\, k_1^2k_3^2\neq0,
\end{split}
\end{equation*}
where
$\mbox{\boldmath $\hat c$} =\, (\hat c_0,\,\hat c_1,\,\ldots,\,
\hat c_8,\,\hat c_{11},\,\hat c_{15},\,\hat c_{19},\,\ldots,\, \hat c_{51})$,
$\mbox{\boldmath $\delta$}_{4} =\, (\widehat B_0,\,\widehat B_1,\,\ldots,\,\widehat B_{19})$,
then by Theorem \ref{th4}, for $\theta=\arctan\frac{121\sqrt{6}}{100}$ there exists $0<\varepsilon_0\ll1$ such that $\widetilde M(h)$ has at most $19$ zeros (counting multiplicity)  for $h\in(-\varepsilon_0,0)$,
and the upper bound can be reached.
The proof is completed.

\section{Acknowledgments}
This work was supported by the National Natural Science Foundation of 
China (NSFC No. 12371175).


\end{document}